\newtheorem{thm}{Theorem}[section]
\newtheorem{defn}[thm]{Definition}
\newtheorem{rmk}[thm]{Remark}
\newtheorem{prop}[thm]{Proposition}
\newtheorem{lem}[thm]{Lemma}
\title{On the cover time of $\lambda$-biased walk on supercritical Galton-Watson trees}
\author{Tianyi Bai\footnote{Laboratoire Analyse G\'{e}om\'{e}trie et Applications,  UMR 7539, CNRS, Universit\'{e} Paris 13 - Sorbonne Paris Cit\'{e}, Universit\'{e} Paris 8, 99, Avenue Jean-Baptiste Cl\'{e}ment, 93430 Villetaneuse. Email: bai@math.univ-paris13.fr}}
\date{}
\begin{document}
\maketitle
\begin{abstract}
In this paper, we study the time required for a $\lambda$-biased ($\lambda>1$) walk to visit all the vertices of a supercritical Galton-Watson tree up to generation $n$. Inspired by the extremal landscape approach in \cite{2019} for the simple random walk on binary trees, we establish the scaling limit of the cover time in the biased setting.
\end{abstract}
\bibliographystyle{plain}

\section{Introduction}
\subsection{The model}
A planar tree $\mathbf T$ is a subset of $\cup_{n\ge 0}\mathbb N^n_+$ such that
\begin{itemize}
    \item The root $\varnothing$ is in $\mathbf T$, where by convention, $\mathbb N_+^0=\{\varnothing\}$.
    \item For every vertex $x=(x_1,\cdots,x_n)\in \mathbf T$, its parent $$\overleftarrow x=(x_1,\cdots,x_{n-1})\in\mathbf T.$$
    \item There exists an integer $\nu_x(\mathbf T)\ge 0$ representing the number of children of $x$, i.e., for every $j\in\mathbb N_+,$ 
    $$(x_1,\cdots,x_n,j)\in \mathbf T \text{ if and only if } 1\le j \le \nu_x(\mathbf T).$$
\end{itemize}

For $x\in\mathbf T$, denote by $|x|=n$ its height. For two vertices $x,y\in\mathbf T$, write $x\preceq y$ if $x$ is on the simple path from $\varnothing$ to $y$. Denote by $z=x\wedge y$ the common ancestor, i.e. the vertex with maximum height $|z|$ such that $z\preceq x, y$. Denote the tree $\mathbf T$ chopped at height $n$ by $\mathbf T_n=\{x\in\mathbf T,|x|\le n\}$ (in $\mathbf {T}_n$, $\nu_x=0$ if $|x|=n$), and
the population in generation $n$ by
$Z_n=\sum_{x\in\mathbf T}\mathbf 1_{|x|=n}.$
Denote the subtree of $\mathbf T$ rooted at $x$ by
$\mathbf T^{x}=\{y\in\mathbf T,x\preceq y\},$
and the population in the $n$-th generation of $\mathbf T^{x}$ by
$Z_n^{x}=\sum_{y\in \mathbf T^{x}}\mathbf 1_{|y|=n}.$ For convenience of further usage (local time related calculations), an artificial root $\overleftarrow\varnothing$ with height $-1$ is added to be the parent of $\varnothing$. 

For any fixed probability distribution $\mu=(\mu_n)_{n\in\mathbb N}$, the Galton-Watson tree with offspring distribution $\mu$ is a measure $\mathbf P_{\text{GW}}$ on the set of planar trees, such that all the vertices have children distributed identically and independently as $\mu$, i.e. 
$$\nu_x\overset{iid}{\sim}\mu,\forall x\ne\overleftarrow\varnothing.$$
The expectation under this probability measure is denoted by $\mathbf E_{\text{GW}}$.

Let $m=\sum_{i=0}^\infty i\mu_i$ denote the average number of children for a vertex (except $\overleftarrow\varnothing$), then there is a standard result on Galton-Watson trees that, if $m>1$ (so-called supercritical), then the tree extends to infinity with a positive probability. In other words, denote by $\mathcal S$ the event that a Galton-Watson tree survives, $\{\mathbf T:Z_n(\mathbf T)>0,\forall n\ge 0\}$, then $\mathbf P_{\text{GW}}(\mathcal S)>0$ for $m>1$. 
To study the asymptotic behavior of the cover time, we study the first $n$ generations of a supercritical Galton-Watson tree under the conditional probability measure $\mathbf P_{\text{GW}}(\cdot|\cal S)$. 

By the Kesten-Stigum theorem (cf. \cite{kesten1966limit}), $(\frac{Z_n}{m^n})_{n\ge 0}$ is a martingale, and that it converges to a nontrivial limit $\mathbf P_{\text{GW}}(\cdot|\cal S)$-almost surely,
\begin{equation}\label{Wpositive}
W:=\lim_{n\rightarrow\infty}\frac{Z_n}{m^n}\in(0,\infty),
\end{equation}
when $\mathbf E_{\text{GW}} (Z_1\log Z_1)=\sum_{k\ge 1}k\log k\mu_k<\infty$.

Given a surviving tree $\mathbf T$ and fix $n>0$, let $(X_n(t))_{t\ge 0}$ be a continuous time Markov jump process  on $\mathbf T_n$ starting at $X_n(0)=\overleftarrow\varnothing$, with transition rates $p(\overleftarrow\varnothing,\varnothing)=1$,
$$p(x,\overleftarrow x)=\frac{\lambda}{\lambda+\nu_x},\,p(x,x^{(i)})=\frac{1}{\lambda+\nu_x},\forall x\in\mathbf T_n\backslash\{\overleftarrow\varnothing\},1\le i\le\nu_x.$$
The probability measure for this random walk is denoted by $\mathbf P_{\rm w}(\cdot|\mathbf T_n)$, and its corresponding expectation by $\mathbf E_{\rm w}(\cdot|\mathbf T_n)$. In fact, it is more natural to call the discrete walk $\lambda$-biased random walk, and the results for both settings are the same, see Remark \ref{thmremark} (1).

We shall work under the following hypotheses denote by ({\bf H}),
$$\lambda>1,\,m>1,\,\sum_{k\ge 0}k^2\mu_k<\infty.$$
\paragraph{Conventions:}
\begin{enumerate}
\item The probability of a generic law independent of the constructions above is denoted by $\mathbb P$, with its expectation $\mathbb E$. 
\item When an integer index is needed in the presence of a real value, we mean its integral part, for instance, we write $Z_{\log n}$ for $Z_{[\log n]}$. 
\item We write $f\lesssim g$, if there exists a constant $C>0$ such that $f\le Cg.$ This constant may depend on parameters $m$ and $\lambda$.
\item All the $O(\cdot)$, $o(\cdot)$ notations are under the limit $n\rightarrow \infty$.
\item For convenience of the readers, notations used throughout the paper are gathered here:
$$\sigma_n=\sqrt{\frac{\lambda^{n+1}-1}{\lambda-1}},\,t_n^\mu=\sigma_n^2(\log Z_n+\mu),\,s_n=\sum_{i=0}^n\frac{Z_i}{\lambda^i}.$$
\end{enumerate}
\subsection{Main results}
The goal in this paper is to estimate the cover time defined by
$$T^{cov}_{n}(\mathbf T)=\inf\left\{t:\{X_n(s),0\le s\le t\}=\mathbf T_n\right\}.$$
The main result is
\begin{thm}\label{main}
Under {\rm({\bf H})}, for $\mathbf P_{\rm{GW}}(\cdot|\cal S)$-almost surely any tree $\mathbf T$, with $x\in\mathbb R$ and $n\rightarrow\infty$, 
when $\lambda>m$, 
$$\mathbf P_{\rm w}\left(\frac{(\lambda-1)T_n^{cov}}{2\lambda^{n+1}\sum_{i=0}^{\infty}\frac{Z_i}{\lambda^i}}-n\log m-\log W\le x\middle|\mathbf T_n\right)\rightarrow e^{-e^{-x}};$$
when $\lambda=m$, 
$$\mathbf P_{\rm w}\left(\frac{(m-1)T_n^{cov}}{2m^{n+1}\sum_{i=0}^n Z_i}-n\log m-\log W\le x\middle|\mathbf T_n\right)\rightarrow e^{-e^{-x}};$$
when $1<\lambda<m$, 
$$\mathbf P_{\rm w}\left(\frac{(\frac{m}{\lambda}-1)(\lambda-1)T_n^{cov}}{2Wm^{n+1}}-n\log m-\log W\le x\middle|\mathbf T_n\right)\rightarrow e^{-e^{-x}}.$$
\end{thm}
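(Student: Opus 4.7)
My plan is to recognize the cover time as essentially the maximum of the first hitting times $H_x$ of the vertices at generation $n$, and then to identify the Gumbel limit via an extreme value analysis of the family $\{H_x\}_{|x|=n}$. This follows the extremal landscape philosophy of \cite{2019}, adapted from the simple random walk on binary trees to the $\lambda$-biased walk on a supercritical Galton-Watson tree. A short argument using the tree structure shows that $T_n^{cov}-\max_{|x|=n}H_x$ is negligible compared to the scale in the theorem, since the walk automatically covers all internal vertices on its way to the level-$n$ leaves, and the remaining time after the last leaf is hit is of order one mean excursion.

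\textbf{Distribution of a single hitting time.} The walk is reversible with conductances $c(\overleftarrow y,y)=\lambda^{-|y|}$, so classical network techniques apply. For any vertex $x$ with $|x|=n$, the effective resistance from $\overleftarrow\varnothing$ to $x$ along the unique path is $R_{\rm eff}(\overleftarrow\varnothing\leftrightarrow x)=\sum_{i=0}^n\lambda^i=\sigma_n^2$, and the total edge conductance is exactly $s_n$. Decomposing $(X_n(t))$ into excursions from $\overleftarrow\varnothing$, each excursion reaches $x$ with probability $1/\sigma_n^2$ and has mean length of order $s_n$, so the number of excursions before $x$ is hit is geometric with parameter of order $1/\sigma_n^2$. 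Consequently $H_x$ is approximately exponentially distributed with mean of order $\sigma_n^2 s_n$, consistent with the commute-time identity $\mathbf E[H_x]+\mathbf E_x[H_{\overleftarrow\varnothing}]=2\sigma_n^2 s_n$.

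\textbf{Joint distribution and Poisson point process.} To handle the maximum, I would introduce an intermediate level $k=k_n$ with $k\to\infty$ and $n-k\to\infty$, and decompose the level-$n$ vertices according to their ancestor at level $k$. Conditional on the excursion history up to level $k$, the hitting times of descendants in different level-$k$ subtrees become asymptotically independent. Combined with Step 2, one expects the rescaled point process
\[\sum_{|x|=n}\delta_{H_x/C_n-n\log m-\log W}\]
to converge to a Poisson point process with intensity $e^{-y}\,dy$, where $C_n$ is the regime-dependent scale appearing in Theorem \ref{main}. The shift $n\log m+\log W\sim\log Z_n$ arises because the effective number of quasi-independent trials at level $n$ is $Z_n\sim Wm^n$ by Kesten-Stigum (\ref{Wpositive}), and the Gumbel limit follows from taking the maximum of the atoms.

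\textbf{Three regimes and main obstacle.} The three cases correspond to qualitatively different behaviors and require separate treatments of the scale $C_n$. For $\lambda>m$ (positive recurrent) one has $s_n\to s_\infty$ a.s., and the commute time $2\sigma_n^2 s_n$ directly governs a single hitting time, yielding the first denominator. For $\lambda<m$ (transient in the infinite tree) one has $s_n\sim W(m/\lambda)^{n+1}/(m/\lambda-1)$ dominated by the deepest levels, and substitution yields the third denominator. The critical case $\lambda=m$ is null-recurrent and the most subtle: the within-excursion behavior is itself heavy-tailed, so the geometric-of-iid-excursions approximation from the root degrades, and I expect to work with excursions from a mesoscopic level to recover the stated scale $2m^{n+1}\sum_{i=0}^n Z_i/(m-1)$. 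The main obstacle in all three cases is the joint convergence in Step 3: it requires controlling the covariances $\mathrm{Cov}_{\rm w}(H_x,H_y)$ through the common ancestor $x\wedge y$, and doing so uniformly for $\mathbf P_{\rm GW}$-almost every realization of the tree. The hypothesis $\sum_{k\ge 0} k^2\mu_k<\infty$ in {\bf (H)} is what permits the fluctuations of $s_n$ and of the Kesten-Stigum martingale to be controlled uniformly in $n$; propagating this control through the critical regime $\lambda=m$ is what I expect to be the hardest part of the proof.
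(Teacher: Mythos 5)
Your overall strategy (extreme values, intermediate-level decoupling, Poisson point process, Gumbel limit) is the right shape, but the technical core that makes it work is missing. The central move in the paper is to change clocks: rather than studying real-time hitting times $H_x$, it works with normalized local times $L^x_n(t)$ at a fixed excursion time $t$ (the accumulated local time at $\overleftarrow\varnothing$), via the time change $\tau_n$ of Definition \ref{basicdefn}. This has two advantages you do not recover. First, $L^x_n(t)$ has an explicit $\mathtt{PG}$ law, and conditionally on the local times at level $n-c\log n$ the local times in disjoint subtrees are \emph{exactly} independent (Lemma \ref{LTcharacter}); by contrast, your real-time $H_x$'s share the time the walk spends elsewhere, and your claimed conditional ``asymptotic independence'' given the excursion history at level $k$ is precisely the hard part, with no mechanism offered for quantifying it. Second, the concentration of the intermediate-level local times around $t_n^\mu$ is obtained through the second Ray-Knight theorem plus a DGFF tail bound (Lemma \ref{RegulationAncestor}); this is what powers the second-moment estimate (Lemma \ref{StructureOfF}) showing that unvisited level-$n$ vertices have distinct ancestors at level $n-c\log n$, and nothing in your sketch plays that role. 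You also assert that $T^{cov}_n-\max_{|x|=n}H_x$ is negligible because the walk covers internal vertices on its way down, but a supercritical Galton-Watson tree has leaves strictly below level $n$; these require a separate union-bound argument (Lemma \ref{badgwcase}).

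Your discussion of the three regimes also misdiagnoses where the difficulty lies. Once Proposition \ref{mainexcursion} gives the Gumbel limit in excursion time, the conversion to real time is not ``one mean excursion''; it needs a variance bound on $\tau_n(t)=\sum_x\pi_n(x)L^x_n(t)$ (Lemma \ref{variancecal}), showing $\mathrm{Var}(\tau_n(t))=o(t\lambda^ns_n^2/n)$ so that $\tau_n(t_n^\mu)$ concentrates at scale smaller than $2s_n\sigma_n^2$. This is where the moment hypothesis $\sum k^2\mu_k<\infty$ really enters (through the bound $\sum_{|x|=n-c\log n}(Z^x_n)^2\le Z_n^{1+\epsilon}$ of Lemma \ref{typical}), not primarily in controlling fluctuations of $s_n$ or of the Kesten-Stigum martingale. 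With this concentration in hand, Proposition \ref{maincore} handles all three regimes by a single argument; the three displayed formulas differ only in the elementary asymptotics of $s_n=\sum_{i=0}^nZ_i/\lambda^i$ derived from (\ref{211}) at the very end. In particular, $\lambda=m$ is not a special difficulty in this framework, and your proposal to run ``excursions from a mesoscopic level'' there is an artifact of trying to work directly in real time.
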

\begin{rmk}\label{thmremark}
\begin{enumerate}
\item The the same result holds for the discrete time random walk with the same transition probabilities, since $(X_n(t))_{0\le t\le t_0}$ takes $t_0+O(\sqrt {t_0})$ steps, and the error is negligible for the cover time.
\item In fact, the conditions in Lemma \ref{typical} are the only requirements for $\mathbf T$. Therefore, the result applies to other (random) trees satisfying these conditions, not necessarily the Galton-Watson trees.
\item The $\lambda$-biased case agrees with the case of the simple random walk (cf. eg. \cite{drznew}) in first order in the number of excursions (round trips from $\overleftarrow\varnothing$) performed, for details see Remark \ref{gffcontrast}.
\item If one define the random walk on $\mathbf T$ instead of $\mathbf T_n$, the relation of $\lambda$ and $m$ discussed in Theorem \ref{main} correspond to whether the walk is transient, positive recurrent, or null recurrent.
\end{enumerate}
\end{rmk}
\subsection{Related works}

The cover time of a finite graph (by the simple random walk), $T^{cov}(G)$, is a fundamental object for a finite graph $G=(V,E)$ (Section 2, Lov\'{a}sz \cite{lovasz1993random}). For a simple graph $G$ with $|V|=n$, a tight bound for its cover time was given in Feige \cite{feige1995tight}, \cite{feige1995tight2} 
$$(1-o(1))n\log n\le T^{cov}(G)\le 4n^3/27.$$

Bounds using hitting time were given in Matthews \cite{matthews1988covering},
$$\max_{S\subseteq G}\min_{u,v\in S}H(u,v)(\log (\#E)-1)\le T^{cov}(G)\le\max_{u,v\in G} H(u,v)(1+\log n),$$
where $H(u,v)$ is the expected time that the walk takes from $u$ to $v$.

Up to the first order approximation, a general bound with Discrete Gaussian Free Field (DGFF) was given in Ding, Lee and Peres \cite{JDcover}, then improved in Zhai \cite{zhai2014exponential}, 
$$\mathbb P\left(\left|T^{cov}(G)-\#EM^2\right|\ge \#E(\sqrt{s R}M+s R)\right)\le Ce^{-cs},$$
where $M=\mathbb E(\max_{x\in V}\eta_x)$, $R=\max_{x,y\in V}R_{\text{eff}}(x,y)$, $(\eta_x)_{x\in V}$ is a DGFF on $G$ (centered Gaussian variables such that $\mathrm{Cov}(\eta_x,\eta_y)=R_{\text{eff}}(x,y)$), and $R_{\text{eff}}$ is the effective resistance (cf. \cite{aldous1995reversible}, consider each edge as a wire of electrical resistance $1$, and take effective resistance in the physics sense, following Ohm's law).

Sharper results can be obtained if one restricts to trees. The first order estimate for the cover time of an $m$-ary trees was obtained in Aldous \cite{Aldous},
$$T^{cov}_n=(2+o(1))n^2 \frac{m^{n+1}}{m-1}\log m.$$
It is showed in Andreoletti and Debs \cite{Andr} that, the first $R_n=(\gamma+o(1))\log n$ generations are covered in $n$ steps, by a recurrent Markovian random walk on the Galton-Watson tree, where $\gamma$ is an explicit constant.

The case of simple random walk on binary trees received extensive studies recently, originally as a counterexample showing that at second order, the cover time is no longer determined by the DGFF (cf. \cite{JDsharp}). Second order asymptotics with error $O(\log\log^8 n)$ were given in Ding and Zeitouni \cite{JDsharp}, then refined to $O(1)$ in Belius, Rosen and Zeitouni \cite{sharpplus}, and a scaling limit was established independently by Cortines, Louidor and Saglietti \cite{2019} and Dembo, Rosen and Zeitouni \cite{drznew},
$$\mathbb P\left(\frac{T^{cov}_n}{2^{n+1}n}-n\log 2+\log n\le s\right)=\mathbb E\exp\left(-CZe^{-s}\right),$$
for some explicit constant $C$ and distribution $Z$.

In the studies of the cover time, the continuous counterpart for trees is the two-dimensional torus. The first order estimate of its cover time was determined by Dembo, Peres, Rosen and Zeitouni \cite{dembo2004cover}, then the result was improved in Ding \cite{ding2011cover}, Belius and Kistler \cite{belius2017subleading}, Abe \cite{abe2017second}, and most recently studied by Belius, Rosen and Zeitouni \cite{belius2017tightness} to the extent that
$$\lim_{K\rightarrow\infty}\limsup_{\epsilon\rightarrow 0}\mathbb P\left(\left|\sqrt{T^{cov}_{\epsilon}(M)}-\sqrt{\frac{2A_M}{\pi}}\left(\log\epsilon^{-1}-\frac{1}{4}\log\log\epsilon^{-1}\right)\right|>K\right)=0,$$
where $M$ is a 2-dimensional manifold with some regularity conditions, $A_M$ is the area of $M$, and $T^{cov}_{\epsilon}(M)$ is the time for the walk to intersect every ball of radius $\epsilon$ on $M$. 

\subsection{Proof outline}

The cover time is determined by the excursion time and local times defined as follows.
\begin{defn} \label{basicdefn} Let $\mathbf T$ be an infinite tree, and let $(X_n(t))_{t\ge 0}$ be a random walk on $\mathbf T_n$.
\begin{enumerate}
\item The excursion time is denoted by
$$t_n^{cov}=\int_0^{T_n^{cov}}\mathbf{1}_{\{X_n(s)=\overleftarrow\varnothing\}}ds.$$
\item To establish the relation between $t^{cov}_n$ and $T^{cov}_n$ , let
$$\tau_n(t)=\inf\left\{r>0:\int_0^r\mathbf{1}_{\{X_n(s)=\overleftarrow \varnothing\}}ds\ge t\right\}.$$
\item The (normalized) local time at $x\in\mathbf T_n\backslash\{\overleftarrow\varnothing\}$ is
$$L^x_{n}(t)=\frac{1}{\pi_n(x)}\int_0^{\tau_n(t)}\mathbf{1}_{\{X_n(s)=x\}}ds,$$
where the normalization factor
$$\pi_n(x)=\frac{\lambda+\nu_x}{\lambda^{|x|+1}},\,x\in\mathbf T_n\backslash\{\overleftarrow\varnothing\},$$
is the stationary distribution of the $\lambda$-biased walk scaled at $\pi_n(\overleftarrow\varnothing)=1$.
\end{enumerate}
\end{defn}

Intuitively, the walk $(X_n(t))$ can be seen as independent samples of random walks that starts and ends at $\overleftarrow \varnothing$. Each of these trials is called an excursion. Then $t^{cov}_n$ is the number of excursions performed to cover the tree $\mathbf T_n$, $\tau_n(t)$ is the actual time spent in $t$ excursions, and $L^x_n(t)$ encodes the status of $x$ in $t$ excursions. From these definitions, we have
\begin{equation}\label{TtRelation}
\begin{aligned}
&\tau_n(t^{cov}_n)\le T_n^{cov}\le \lim_{\epsilon\rightarrow 0+}\tau_n(t_n^{cov}+\epsilon)\\
&\tau_n(t)=\sum_{x\in \mathbf T_n}\pi_n(x)L^x_n(t).
\end{aligned}
\end{equation}

In fact, it is not hard to determine $L^x_t$ (Lemma \ref{LTcharacter}) and $\tau_n$ (Lemma \ref{variancecal}), and the main part of the proof is to estimate $t^{cov}_n$. The key observation is that, when the tree is almost covered (at the first order estimate of the cover time), the non-visited vertices can be seen as independent. (This is inspired by the extremal landscape structure in \cite{2019}, see Remark \ref{gffcontrast} (1) for details.) The scaling limit of the cover time is then established by characterizing the process afterwards.

The paper is organized as follows. In Section \ref{basicnotion}, we give the  regularity conditions on trees and determine the distribution of local times $L^x_n$. In Section \ref{neariid}, we establish the scaling limit for $t^{cov}_n$. And in Section \ref{relation}, we estimate $T^{cov}_n$ by studying $\tau_n$, and finish the proof of Theorem \ref{main}.

\section{Preliminaries}\label{basicnotion}
\subsection{The trees}

\begin{lem}\label{typical}
Let $c,\epsilon>0$. Under {\rm({\bf H})}, for $\mathbf P_{\rm{GW}}(\cdot|\cal S)$-almost surely any tree $\mathbf T$, when $n$ is large enough,
\begin{align}
\label{213}\sum_{i=0}^{n}Z_i&\lesssim Z_n,\\
\label{211}\left|Z_n-m^nW\right|&\le m^{n/2}\log n,\\
\label{212}\sum_{|x|=n-c\log n}(Z^x_n)^2&\le Z_n^{1+\epsilon}.
\end{align}
\end{lem}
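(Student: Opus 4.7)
The plan is to prove the three bounds in the order (\ref{211}), (\ref{213}), (\ref{212}), since the second follows from the first together with $Z_n/m^n \to W > 0$ almost surely, and the third invokes (\ref{211}) in its last step.

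For (\ref{211}), the starting point is the branching decomposition: conditional on $\mathbf T_n$, the descendants of the vertices at generation $n$ form independent Galton-Watson trees, so
$$W - W_n \;=\; \frac{1}{m^n}\sum_{|x|=n}\bigl(W^x - 1\bigr),$$
with $(W^x)_{|x|=n}$ i.i.d.\ copies of $W$, centered at $\mathbf E W = 1$. Under $\sum k^2\mu_k<\infty$ the Kesten-Stigum $L^2$ theory yields $\mathbf E W^2<\infty$, so the sum has conditional variance $Z_n\,\mathrm{Var}(W)$ and hence $\mathbf E[(Z_n-m^nW)^2]\lesssim m^n$. Chebyshev alone delivers a tail of order $(\log n)^{-2}$, which is not Borel-Cantelli summable; I would upgrade to an almost sure statement by first applying the bound along a geometric subsequence $n_k = 2^k$, on which $(\log n_k)^{-2}\sim k^{-2}$ is summable, and then filling in the gaps $n_k\le n\le n_{k+1}$ via Doob's $L^2$ maximal inequality applied to the forward martingale $(W_n - W_{n_k})_{n\ge n_k}$, which gives $\mathbf P\bigl(\sup_{n\ge n_k}|W_n-W_{n_k}|>\varepsilon\bigr)\lesssim m^{-n_k}/\varepsilon^2$. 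A careful tuning of the thresholds (or, equivalently, an appeal to a Heyde-type fluctuation theorem for supercritical branching processes) converts this into the desired $m^{n/2}\log n$ a.s.\ bound.

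For (\ref{213}), Kesten-Stigum gives a $\mathbf P_{\rm GW}(\cdot|\mathcal S)$-a.s.\ random constant $C = C(\mathbf T)$ with $C^{-1}m^i\le Z_i\le C m^i$ for $i$ large, whence $\sum_{i=0}^n Z_i\le C\sum_{i=0}^n m^i \lesssim m^n \lesssim Z_n$. For (\ref{212}), condition on the $\sigma$-field $\mathcal F_{n-c\log n}$ generated by the first $n-c\log n$ generations. The subtrees rooted at $\{|x|=n-c\log n\}$ are conditionally independent Galton-Watson trees, and each $Z_n^x$ has the law of $Z_{c\log n}$ under $\mathbf P_{\rm GW}$. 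Since $\mathbf E W^2<\infty$ gives $\mathbf E[Z_k^2]\lesssim m^{2k}$, it follows that
$$\mathbf E\Big[\sum_{|x|=n-c\log n}(Z_n^x)^2\,\Big|\,\mathcal F_{n-c\log n}\Big]\;\lesssim\;Z_{n-c\log n}\cdot m^{2c\log n}\;\lesssim\;m^{n}\,n^{c\log m},$$
where the last step invokes (\ref{211}) to replace $Z_{n-c\log n}$ by $m^{n-c\log n}W$ up to a multiplicative constant. Using (\ref{211}) once more to bound $Z_n^{1+\epsilon}\gtrsim m^{n(1+\epsilon)}$, the ratio of the expectation above to the target is $\lesssim n^{c\log m}\, m^{-n\epsilon}$, which is summable in $n$, and Borel-Cantelli closes the estimate.

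The main obstacle is the almost sure version of (\ref{211}) under the minimal moment assumption $\sum k^2\mu_k<\infty$: the Chebyshev tail $(\log n)^{-2}$ is just shy of summability, so the subsequence-plus-interpolation scheme must be tuned with some care, and the asserted rate $m^{n/2}\log n$ sits essentially at the boundary of what pure second-moment arguments can reach. Once (\ref{211}) is in hand, both (\ref{213}) and (\ref{212}) are routine consequences.
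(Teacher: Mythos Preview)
Your treatments of (\ref{213}) and (\ref{212}) match the paper's; for (\ref{212}) the paper takes the unconditional expectation $\mathbf E_{\rm GW}\bigl[\sum_{|x|=n-c\log n}(Z_n^x)^2\bigr]\lesssim m^{n-c\log n}\cdot m^{2c\log n}$ and applies Markov against the deterministic threshold $m^{(1+\epsilon)n}$, rather than conditioning on $\mathcal F_{n-c\log n}$ and then inserting almost-sure bounds on $Z_{n-c\log n}$ and $Z_n$ into the probability estimate. This is marginally cleaner but equivalent in substance.

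For (\ref{211}) the paper does exactly your parenthetical: it cites Heyde's law of the iterated logarithm, $\limsup_n|Z_n-m^nW|/\sqrt{Z_n\log n}=\sqrt{2\mathrm{Var}(\mu)/(m^2-m)}$ a.s., from which the bound is immediate. Your direct subsequence-plus-Doob scheme, by contrast, does not reach the stated rate, and the obstruction is structural rather than a matter of tuning. Along $n_k=2^k$ the Chebyshev tail $k^{-2}$ is summable, but in the gap $n_k\le n\le 2n_k$ the contribution $m^{n-n_k}\,|Z_{n_k}-m^{n_k}W|$ alone is of order $m^{3n_k/2}\log n_k$, overshooting the target $m^{n/2}\log n\asymp m^{n_k}\log n_k$ by a factor $m^{n_k/2}$; any subsequence sparse enough for summability forces $n_{k+1}-n_k\to\infty$ and produces the same blow-up. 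The cleanest pure second-moment route---Chebyshev on each martingale increment $W_{k+1}-W_k$, then sum---yields only $|Z_n-m^nW|\lesssim n\,m^{n/2}$, which in fact suffices for every downstream use of (\ref{211}) in the paper but not for the stated $\log n$. For that rate one really does need Heyde's theorem, as the paper invokes.
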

\begin{proof}
By (\ref{Wpositive}), $\mathbf P_{\text{GW}}(\cdot|\cal S)$-almost surely, $W\in (0,\infty)$. Therefore, there exists a constant $C>1$ (depending on $\mathbf T$) such that for all $n\ge 0$, 
\begin{equation*}
\frac{1}{C}Wm^n<Z_n<CWm^n.
\end{equation*}
(There are only finitely many $n$ violating this relation with $C=2$, take the maximum constant among these $n$.) One can then deduce (\ref{213}).

Moreover, by Theorem 2, \cite{heyde}: $\mathbf P_{\text{GW}}(\cdot|\mathcal S)$-almost surely, 
$$\limsup_{n\rightarrow\infty}\left|\frac{Z_n-m^nW}{\sqrt{Z_n\log n}}\right|=\sqrt{\frac{2\mathrm{Var}\mu}{m^2-m}},$$
where we recall that $\mu$ is the offspring distribution of $\mathbf P_{\text{GW}}$.

Therefore one can take $n$ large enough such that
\begin{equation*}
\begin{aligned}
|Z_n-m^nW|\lesssim \sqrt{Z_n\log n}\le \sqrt{CWm^n\log n},
\end{aligned}
\end{equation*}
and (\ref{211}) follows.

By standard calculations, 
$$\mathbf E_{\text{GW}}Z_n=m^{n}, \mathrm{Var}_{\text{GW}} (Z_n)=m^{n-1}\frac{m^n-1}{m-1}\mathrm{Var} \mu\lesssim m^{2n},$$
therefore, for $n$ large enough,
\begin{equation*}
\begin{aligned}
\mathbf P_{\text{GW}}\left(\sum_{|x|=n-c\log n}(Z^x_n)^2> m^{(1+\epsilon)n}\middle| \cal S\right)&\le\frac{\mathbf E_{\text{GW}}(\sum_{|x|=n-c\log n}(Z^x_n)^2|\cal S)}{m^{(1+\epsilon)n}}\\
&\lesssim \frac{\mathbf E_{\text{GW}}(Z_{n-c\log n})m^{2c\log n}}{m^{(1+\epsilon)n}}\\
&\lesssim m^{-\frac{\epsilon n}{2}},
\end{aligned}
\end{equation*}
and (\ref{212}) follows from the union bound,
$$\mathbf P_{\text{GW}}\left(\exists n>N,\sum_{|x|=n-c\log n}(Z^x_n)^2> m^{(1+\epsilon)n}\middle|\cal S\right)\lesssim \sum_{n>N}m^{-\frac{\epsilon n}{2}}\overset{N\rightarrow\infty}{\longrightarrow}0.$$
We remark that $\sum_{|x|=n-r}(Z_n^x)^2$ is monotone increasing in $r$, therefore, the event (\ref{212}) is decreasing in $r$, and once it is valid for $c\log n$, it is valid for all $0\le r\le c\log n$ simultaneously.
\end{proof}
\subsection{The local times}
\begin{defn}\label{reffsigmapg}Let $\mathbf T$ be a tree.
\begin{enumerate}
\item The effective resistance (for the $\lambda$-biased walk) between $x,y\in\mathbf T$ is
\begin{equation*}
R_{\rm{eff}}(x,y)=\sum_{\substack{(z,\overleftarrow z)\\ \text{ on the simple path}\\ \text{from }x\text{ to }y}}\lambda^{|z|}.
\end{equation*}
\item The effective resistance between $\overleftarrow\varnothing$ and any vertex at generation $n$ is abbreviated as
$$\sigma_n^2={\frac{\lambda^{n+1}-1}{\lambda-1}}.$$ 
\item For $a,b>0$, let $\mathtt{PG}(a,b)$ be the distribution of $\sum_{i=1}^P E_i$, where $P$ and $E_i$ are independent random variables such that $P\sim\mathtt{Poisson}(a)$ has Poisson distribution of expected value $a$, and $E_i\sim\mathtt{Exp}(b)$ has exponential distribution of expected value $\frac{1}{b}$.
\end{enumerate}
\end{defn}
The following lemma characterizes local times.
\begin{lem}\label{LTcharacter}
On any infinite tree $\mathbf T$, let $x,y\in\mathbf T_n$ such that $y\prec x$ (i.e. $y$ is a strict ancestor of $x$), let $s,t>0$. Then under $\mathbf P_{\rm w}(\cdot|\mathbf T_n)$,
\begin{align*}
L_n^x(t)&\sim\mathtt{PG}\left(\frac t {\sigma_{|x|}^2},\frac 1 {\sigma_{|x|}^2}\right),\\
\left(L_n^x(t)\middle| L_n^{y}(t)=s\right)&\sim\mathtt{PG}\left(\frac{s}{\sigma_{|x|}^2-\sigma_{|y|}^2},\frac{1}{\sigma_{|x|}^2-\sigma_{|y|}^2}\right).
\end{align*}
\end{lem}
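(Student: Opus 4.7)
The lemma is a Ray-Knight-type identity for the $\lambda$-biased walk, and my plan is to establish both formulas by excursion decomposition at the appropriate vertex. The underlying structural fact I will use repeatedly is that every vertex has unit exit rate, so all holding times are i.i.d.\ $\mathtt{Exp}(1)$; this is what produces the Poisson counts in the $\mathtt{PG}$ distribution.

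For the first identity, I would decompose $(X_n(s))_{0\le s\le\tau_n(t)}$ into excursions from $\overleftarrow\varnothing$. The i.i.d.\ $\mathtt{Exp}(1)$ holds at $\overleftarrow\varnothing$ sum to exactly $t$ by definition of $\tau_n(t)$, so by the standard Poisson process identity the number of completed excursions is $\mathtt{Poisson}(t)$, and by strong Markov they are i.i.d. Using the electrical-network correspondence (edge resistance $\lambda^{|z|}$ on $(z,\overleftarrow z)$, total conductance $c_x=\pi_n(x)$), each excursion visits $x$ with probability $p_x=R_{\rm eff}(\overleftarrow\varnothing,\varnothing)/R_{\rm eff}(\overleftarrow\varnothing,x)=1/\sigma_{|x|}^2$ (harmonic function on the path $\overleftarrow\varnothing\to x$); given a visit, it revisits $x$ a $\mathtt{Geom}(1-q_x)$ number of times with escape probability $1-q_x=1/(c_x R_{\rm eff}(x,\overleftarrow\varnothing))=1/(\pi_n(x)\sigma_{|x|}^2)$, each visit contributing an $\mathtt{Exp}(1)$ hold, whose geometric sum is $\mathtt{Exp}(1-q_x)$. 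Poisson thinning combined with normalization by $\pi_n(x)$ (which rescales the exponential rate to $1/\sigma_{|x|}^2$) yields $L_n^x(t)\sim\mathtt{PG}(t/\sigma_{|x|}^2,1/\sigma_{|x|}^2)$.

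For the conditional identity, the same argument applies with $y$ in place of $\overleftarrow\varnothing$. Since $\mathbf T$ is a tree and $y$ lies on the unique path from $\overleftarrow\varnothing$ to $x$, every trip to $x$ must pass through $y$, so $L_n^x$ is accumulated entirely during $y$-excursions. I would handle the conditioning on $L_n^y(t)=s$ by introducing the inverse local time $\tau^y(s)=\inf\{r:L_n^y(r)\ge s\}$ and observing that on the event $\{L_n^y(\tau_n(t))=s\}$ the walk cannot revisit $y$ on $(\tau^y(s),\tau_n(t)]$ (otherwise $L_n^y$ would exceed $s$), and hence by the tree property cannot revisit $x$ either; thus $L_n^x(t)=L_n^x(\tau^y(s))$ on this event. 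By strong Markov at $\tau^y(s)$, the future event $\{L_n^y(\tau_n(t))=s\}$ is independent of the past of $\tau^y(s)$, so the conditional law of $L_n^x(t)$ given $L_n^y(t)=s$ coincides with the unconditional law of $L_n^x(\tau^y(s))$. Running the argument of the first part with $(\overleftarrow\varnothing,t,\sigma_{|x|}^2)$ replaced by $(y,s,\sigma_{|x|}^2-\sigma_{|y|}^2)$, and using $R_{\rm eff}(y,x)=\sigma_{|x|}^2-\sigma_{|y|}^2$ (difference of path resistances on the tree), produces the stated $\mathtt{PG}$ law.

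The main obstacle is precisely this conditioning step for the second identity: since $\tau_n(t)$ is a stopping time relative to the filtration based at $\overleftarrow\varnothing$ and not at $y$, one must carefully transfer the Poisson count of $y$-excursions from one setting to the other, and the event being conditioned on has density-zero probability so the argument must be phrased via regular conditional distributions. The strong-Markov / inverse-local-time argument sketched above is the conceptual shortcut and hinges critically on the tree property; a more computational alternative is to evaluate the joint Laplace transform of $(L_n^y(t),L_n^x(t))$ by a double excursion decomposition and verify the required conditional factorization directly.
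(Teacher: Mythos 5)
Your proposal takes the same route as the paper: excursion decomposition at $\overleftarrow\varnothing$ (respectively at $y$), the electrical-network identity $\mathbf P_v(\tau_w<\tau_v^+)=1/(\pi_n(v)R_{\rm eff}(v,w))$ to get the Poisson thinning probability and the per-visit escape rate, and the fact that a geometric sum of $\mathtt{Exp}(1)$ holds is exponential with rate equal to the escape probability. The paper handles the conditional identity in one line (``conditioned at $L_n^y(t)$, the proof is similar''), whereas you spell out the inverse-local-time reduction $L_n^x(t)=L_n^x(\tau^y(s))$ on $\{L_n^y(t)=s\}$; this is a useful elaboration.

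One step is stated imprecisely. You write that ``by strong Markov at $\tau^y(s)$, the future event $\{L_n^y(t)=s\}$ is independent of the past of $\tau^y(s)$.'' That event is not a purely future event: unwinding the definition of $\tau_n(t)$, it says that the post-$\tau^y(s)$ walk, started at $y$, reaches $t-a$ units of local time at $\overleftarrow\varnothing$ without ever returning to $y$, where $a$ is the local time at $\overleftarrow\varnothing$ already accumulated by time $\tau^y(s)$ --- and $a$ is $\mathcal F_{\tau^y(s)}$-measurable. Strong Markov alone therefore does not give the independence of $\{L_n^y(t)=s\}$ from $L_n^x(\tau^y(s))$. What rescues the argument is an additional independence: $L_n^x(\tau^y(s))$ is a functional of the walk's sub-excursions from $y$ into the subtree $\mathbf T^y$, while $a$ is a functional of the sub-excursions from $y$ towards $\overleftarrow\varnothing$; given the local time at $y$, these two excursion families are independent, because each departure from $y$ chooses its direction independently with fixed probabilities and the holding times at $y$ are memoryless. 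This subtree/complement independence is precisely what the paper's opening ``memoryless property'' observation is pointing at, and it is also the ``tree property'' you correctly flag as crucial in your last paragraph --- so the ingredient is present in your sketch, it just needs to replace, rather than merely accompany, the strong-Markov assertion. Once that substitution is made, running the first-identity argument with $(\overleftarrow\varnothing,t,\sigma_{|x|}^2)$ replaced by $(y,s,\sigma_{|x|}^2-\sigma_{|y|}^2)$ gives the stated $\mathtt{PG}$ law, as you say.
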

\begin{proof}
By the memoryless property of the exponential distribution (if $X\sim\mathtt{Exp}(1)$, then $(X-c|X>c)\overset{d}{=}X$), $L^x_n(t)$ is only affected by local times on the ray from $\overleftarrow\varnothing$ to $x$, independent of movements on other branches or offspring of $x$.

By knowledge of reversible Markov chains (cf. eg. (3.24), p.69, \cite{aldous1995reversible}, it can be easily checked that $R_{\text{eff}}$ defined in Definition \ref{reffsigmapg} do correspond to that of a reversible Markov chain, since there is only one simple path between two vertices),
\begin{equation*}
\frac{1}{R_{\text{eff}}(x,y)}=\pi(x)\mathbf P_x(\tau_y<\tau_x^+|\mathbf T_n),
\end{equation*}
where $\mathbf P_x$ is the $\lambda$-biased walk starting at $x$, $\tau_y$ is the first hitting times of $y$, i.e. $\tau_y=\inf\{t>0,X_n(t)=y\}$, and $\tau_x^+$ is the first returning time to $x$, $\tau_x^+=\inf\{t>0,X_n(t)=x,X_n(s)\not\equiv x,0\le s\le t\}$. In particular, $$\mathbf P_x(\tau_{\overleftarrow\varnothing}<\tau_x^+|\mathbf T_n)=\frac{1}{\sigma_{|x|}^2\pi_n(x)}.$$

Up to excursion time $t$, there are $\mathtt{Poisson}(t)$ departures from $\overleftarrow\varnothing$, and by the equation above, each trip hits $x$ independently with probability $\frac{1}{\sigma_{|x|}^2}$, thus there are $\mathtt{Poisson}\left(\frac{t}{\sigma_{|x|}^2}\right)$ arrivals on $x$. Upon arrival at $x$, the walk returns to $\overleftarrow\varnothing$ in exponential time, with rate $\mathbf P_x(\tau_{\overleftarrow\varnothing}<\tau_x^+|\mathbf T_n)=\frac{1}{\sigma_{|x|}^2\pi_n(x)}$. Therefore, the total time spent at $x$ has distribution $\mathtt{PG}\left(\frac{t}{\sigma_{|x|}^2},\frac{1}{\sigma_{|x|}^2\pi_n(x)}\right)$. Recall that local times are normalized by $\frac{1}{\pi_n}$, and the result follows.

Conditioned at $L^y_n(t)$, the proof is similar. (Only to notice that the local times at both $x,y$ are normalized.)
\end{proof}

\begin{lem}\label{PGextreme}Let $a,b>0$, let $X\sim\mathtt{PG}(a,b).$
\begin{enumerate}
\item The $\mathtt{PG}$ distribution has basic properties
$$bX\sim\mathtt{PG}(a,1),\,\mathbb E(X)=\frac{a}{b},\,\mathrm{Var}(X)=\frac{2a}{b^2},\,\mathbb P(X=0)=e^{-a}.$$
\item If $a>b$, then 
$$\mathbb P(X\le 1)\le e^{2\sqrt{ab}-a-b}.$$
\end{enumerate}
\end{lem}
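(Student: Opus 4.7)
The plan for part (1) is to derive each identity directly from the representation $X=\sum_{i=1}^P E_i$ with $P\sim\mathtt{Poisson}(a)$ and $E_i\sim\mathtt{Exp}(b)$ independent. The scaling identity $bX\sim\mathtt{PG}(a,1)$ is immediate since $bE_i\sim\mathtt{Exp}(1)$. For the first two moments I would condition on $P$ and apply the standard total expectation/variance formulas, giving $\mathbb{E}(X)=\mathbb{E}(P)/b=a/b$ and
$$\mathrm{Var}(X)=\mathbb{E}(P)\mathrm{Var}(E_1)+\mathrm{Var}(P)\mathbb{E}(E_1)^2=\frac{a}{b^2}+\frac{a}{b^2}=\frac{2a}{b^2}.$$
The atom at zero is $\mathbb{P}(X=0)=\mathbb{P}(P=0)=e^{-a}$, since every $E_i$ is almost surely positive and so $X=0$ exactly when no terms are summed.

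For part (2), I would run an exponential tilting (Chernoff) argument. By (1), $bX\sim\mathtt{PG}(a,1)$, so it suffices to bound $\mathbb{P}(Y\le b)$ for $Y\sim\mathtt{PG}(a,1)$. Conditioning on the Poisson count and using the Poisson generating function, the Laplace transform of $Y$ is
$$\mathbb{E}\bigl(e^{-sY}\bigr)=\mathbb{E}\!\left(\mathbb{E}(e^{-sE_1})^{P}\right)=\exp\!\left(a\Bigl(\tfrac{1}{1+s}-1\Bigr)\right)=\exp\!\left(-\tfrac{as}{1+s}\right),\qquad s\ge 0.$$
Markov's inequality applied to $e^{-sY}$ then yields, for every $s>0$,
$$\mathbb{P}(X\le 1)=\mathbb{P}(Y\le b)\le e^{sb}\,\mathbb{E}\bigl(e^{-sY}\bigr)=\exp\!\left(sb-\tfrac{as}{1+s}\right).$$

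Finally I would optimize $f(s)=sb-as/(1+s)$ in $s>0$. Setting $f'(s)=b-a(1+s)^{-2}=0$ gives the unique critical point $s^{\star}=\sqrt{a/b}-1$, which is strictly positive precisely under the hypothesis $a>b$; this is where the assumption enters. Substituting $s^{\star}$ back and simplifying produces $f(s^{\star})=2\sqrt{ab}-a-b$, matching the claimed bound. I do not foresee a serious obstacle: the only delicate point is admissibility of the optimizer, which is exactly guaranteed by $a>b$ (without this hypothesis the unconstrained minimum of $f$ over $s\ge 0$ is $0$, consistent with the fact that the estimate is informative only when $\mathbb{E}(X)=a/b$ is comfortably larger than $1$).
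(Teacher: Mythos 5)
Your proposal is correct and follows essentially the same route as the paper: for part (2) the paper also applies a Chernoff bound using the Laplace transform of $\mathtt{PG}(a,b)$ (computed via the Poisson generating function) and plugs in the optimal exponent $\theta=\sqrt{ab}-b$, which is exactly your $s^\star=\sqrt{a/b}-1$ after the rescaling $\theta=sb$. Your version is slightly more explicit in deriving the optimizer and in verifying the elementary identities of part (1), but the underlying argument is identical.
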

\begin{proof}
(1) is clear by definition. For (2), by Chernoff bounds, for any $\theta>0$,
\begin{equation*}
\begin{aligned}
\mathbb P(X\le 1)&=\mathbb P\left(e^{-\theta X}\ge e^{-\theta}\right)\\
&\le e^{\theta}\sum_{k=0}^{\infty} e^{-a}\frac{a^k}{k!}\left(1+\frac{\theta}{b}\right)^{-k}=e^{\theta-\frac{\theta}{b+\theta}a},
\end{aligned}
\end{equation*}
and the result follows by choosing $\theta=\sqrt {ab}-b$.
\end{proof}

\subsection{Ray-Knight Theorem}
\begin{defn}\label{gff}
A Discrete Gaussian Free Field (DGFF) on a tree $\mathbf T$ is a family of random variables $(\eta_x)_{x\in\mathbf T}$ such that 
$\eta_{\overleftarrow\varnothing}=0$, $(\eta_x)_{x\ne\overleftarrow\varnothing}$ are centered Gaussian variables with (both the effective resistance and the DGFF can be defined up to any scale, the factor $\frac{1}{2}$ is taken in accordance to \cite{2019})
$$\mathbb{E}(\eta_x-\eta_y)^2=\frac{1}{2}R_{\rm{eff}}(x,y).$$
\end{defn}
\begin{rmk}\label{dgffeasyconstruct}
Since we have explicit relative resistances, if we attach an independent Gaussian variable $N_y\sim\mathcal N\left(0,\frac{\lambda^{|y|}}{2}\right)$ at each vertex $y\in\mathbf T\backslash\{\overleftarrow\varnothing\}$, and let $\eta_x=\sum_{y\preceq x}N_y$, then $(\eta_x)_{x\in \mathbf T}$ is a DGFF on $\mathbf T$. 
\end{rmk}
\begin{thm}{(Second Ray-Knight theorem, \cite{RK})}\label{rk}
	For any infinite tree $\mathbf T$, let $(\eta_x)_{x\in\mathbf T}$ be a DGFF on $\mathbf T$ independent of $\mathbf P_{\rm w}(\cdot|\mathbf T_n)$. For any $t>0$,
$$\left\{L_n^x(t)+\eta^2_x:x\in \mathbf T_n\right\}\overset{d}{=}\left\{(\eta_x+\sqrt{t})^2:x\in \mathbf T_n\right\}.$$
\end{thm}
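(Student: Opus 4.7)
The plan is to prove Theorem \ref{rk} by recursion on the tree, exploiting the fact that both sides of the claimed distributional identity possess a spatial Markov property with respect to the parent-child structure of $\mathbf T_n$.

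I would first settle the single-edge base case: for the edge from $\overleftarrow\varnothing$ to $\varnothing$, Lemma \ref{LTcharacter} gives $L_n^\varnothing(t)\sim\mathtt{PG}(t,1)$ since $\sigma_0^2=1$, while the DGFF has $\eta_\varnothing\sim\mathcal N(0,1/2)$. A direct Laplace computation shows
\[
\mathbb E\!\left[e^{-\theta(L_n^\varnothing(t)+\eta_\varnothing^2)}\right]=(1+\theta)^{-1/2}\exp\!\left(-\tfrac{t\theta}{1+\theta}\right)=\mathbb E\!\left[e^{-\theta(\eta_\varnothing+\sqrt t)^2}\right],
\]
the right-hand identity being the Laplace transform of a scaled non-central $\chi_1^2$. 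This is the atom that drives the whole induction.

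Next I would propagate this edge identity up the tree. By Lemma \ref{LTcharacter}, conditional on $L_n^y(t)=s$ the variable $L_n^x(t)$ for $x$ a child of $y$ is $\mathtt{PG}\!\left(s/(\sigma_{|x|}^2-\sigma_{|y|}^2),1/(\sigma_{|x|}^2-\sigma_{|y|}^2)\right)$, and conditional local times in disjoint subtrees above a common vertex are independent (this is the memoryless-excursion property used in the proof of Lemma \ref{LTcharacter}). On the Gaussian side, Remark \ref{dgffeasyconstruct} gives $\eta_x=\eta_y+N_x$ with $N_x\sim\mathcal N(0,\lambda^{|x|}/2)$ independent, and crucially $\lambda^{|x|}=\sigma_{|x|}^2-\sigma_{|y|}^2$. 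Thus each parent-child edge reproduces the base case with time $s$ and resistance $\sigma_{|x|}^2-\sigma_{|y|}^2$ in place of $t$ and $1$, and independent subtrees decouple in the same way on both sides.

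To assemble these pieces I would argue by induction on $n$: assume the identity holds on $\mathbf T_{n-1}$ jointly, then condition on the values at generation $n-1$ and on the common random walk local times at that generation, and use the edge step above to extend to all leaves at generation $n$. The main obstacle, as usual with Ray-Knight type statements, is making the conditioning match on both sides: one must verify that the map $L_n^y(t)\mapsto L_n^y(t)+\eta_y^2$ on the left interacts correctly with the recursive "add an independent Gaussian" step on the right, which is exactly what the edge-wise Laplace identity guarantees when applied with the conditional parameters $(L_n^y(t),\sigma_{|x|}^2-\sigma_{|y|}^2)$. Since the $\lambda$-biased walk on $\mathbf T_n$ is reversible with the effective resistances of Definition \ref{reffsigmapg}, a cleaner alternative is simply to cite the general Second Ray-Knight theorem for reversible Markov chains \cite{RK}; on a tree the two approaches coincide.
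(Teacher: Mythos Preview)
Your proposal is correct and follows precisely the approach the paper indicates. The paper does not actually prove Theorem~\ref{rk}: it cites \cite{RK} and adds only the one-line remark ``In fact, Remark~\ref{dgffeasyconstruct} indicates a direct proof of this theorem by induction.'' Your argument is exactly that induction---the edge-wise Laplace identity combined with the explicit additive construction of the DGFF from Remark~\ref{dgffeasyconstruct} and the conditional independence of local times from Lemma~\ref{LTcharacter}---so you have simply filled in the details the paper leaves to the reader, and your alternative of invoking the general reversible-chain Ray-Knight theorem is the same citation the paper already makes.
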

In fact, Remark \ref{dgffeasyconstruct} indicates a direct proof of this theorem by induction.

\begin{lem}\label{extremeGFF} 
Let $(\eta_x)_{x\in\mathbf T}$ be a DGFF on a tree $\mathbf T$, let $n\ge 0$ and $\mu\in\mathbb R$ such that
$$Z_n>0,\,\log Z_n+\mu>0,$$
we have
$$\mathbb P\left(\max_{|x|=n}\eta_x>\sigma_n \sqrt{\log Z_n+\mu}\right)\le \frac{e^{-\mu}}{2\sqrt{\pi(\log Z_n+\mu)}}.$$
\end{lem}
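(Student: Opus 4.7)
The plan is to use a simple union bound over the $Z_n$ vertices at height $n$, combined with the standard Gaussian tail estimate, taking care of the factor $\frac{1}{2}$ in the variance coming from Definition \ref{gff}.

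First I would observe that for any $x$ with $|x|=n$, since $\eta_{\overleftarrow\varnothing}=0$, the variable $\eta_x$ is centered Gaussian with
\begin{equation*}
\mathbb E(\eta_x^2)=\tfrac{1}{2}R_{\rm eff}(\overleftarrow\varnothing,x)=\tfrac{1}{2}\sigma_n^2,
\end{equation*}
so that $\eta_x/(\sigma_n/\sqrt 2)\sim\mathcal N(0,1)$. Setting $t:=\sqrt{2(\log Z_n+\mu)}>0$, the event $\{\eta_x>\sigma_n\sqrt{\log Z_n+\mu}\}$ is precisely $\{N>t\}$ for a standard Gaussian $N$.

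Next I would apply the classical Mills ratio bound
\begin{equation*}
\mathbb P(N>t)\le \frac{1}{t\sqrt{2\pi}}\,e^{-t^2/2}, \qquad t>0,
\end{equation*}
which with our choice of $t$ gives $t^2/2=\log Z_n+\mu$, hence
\begin{equation*}
\mathbb P(\eta_x>\sigma_n\sqrt{\log Z_n+\mu})\le\frac{1}{\sqrt{2(\log Z_n+\mu)}\sqrt{2\pi}}\cdot\frac{e^{-\mu}}{Z_n}=\frac{e^{-\mu}}{2\sqrt{\pi(\log Z_n+\mu)}\,Z_n}.
\end{equation*}

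Finally, a union bound over the $Z_n$ vertices at generation $n$ multiplies this by $Z_n$ and yields exactly the desired inequality. There is essentially no obstacle here — the only points requiring attention are the factor $\frac{1}{2}$ in the DGFF covariance (which is what makes the exponent come out to $\log Z_n+\mu$ rather than $2(\log Z_n+\mu)$), and the positivity assumption $\log Z_n+\mu>0$ which is needed both to apply the Mills ratio at a positive argument $t$ and to make the square root in the denominator well-defined.
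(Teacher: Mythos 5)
Your proof is correct and follows essentially the same route as the paper's: union bound over the $Z_n$ vertices at height $n$, combined with the Mills-ratio Gaussian tail estimate, after standardizing $\eta_x$ using $\mathrm{Var}(\eta_x)=\tfrac12\sigma_n^2$. You spell out the standardization step (which the paper leaves implicit), but the argument is the same.
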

\begin{proof}
We first recall the Gaussian tail estimate for $x>0$ and $X\sim\mathcal{N}(0,1)$,
\begin{equation*}
\begin{aligned}
\mathbb P(X>x)
=\frac{1}{\sqrt {2\pi}}\int_0^\infty e^{-(y+x)^2/2}dy
\le\frac{e^{-x^2/2}}{\sqrt {2\pi}}\int_0^\infty e^{-xy}dy=\frac{e^{-x^2/2}}{x\sqrt {2\pi}}.
\end{aligned}
\end{equation*}
Then by the union bound, when $\log Z_n+\mu>0,$
\begin{equation*}
\begin{aligned}
\mathbb P\left(\max_{|x|=n}\eta_x>\sigma_n \sqrt{\log Z_n+\mu}\right)
\le Z_{n}\frac{e^{-(\log Z_n+\mu)}}{2\sqrt{\pi(\log Z_n+\mu)}}=\frac{e^{-\mu}}{2\sqrt{\pi(\log Z_n+\mu)}}.
\end{aligned}
\end{equation*}
\end{proof}

\section{Excursion time}\label{neariid}
As showed in the proof outline, the excursion time $t^{cov}_n$ is compared to the quantity 
\begin{equation}\label{aboutcover}
t_n^\mu=\sigma^2_n(\log Z_n+\mu),
\end{equation}
where we recall that
$$\sigma_n^2=\frac{\lambda^{n+1}-1}{\lambda-1}.$$
To start with, we show that the first $n-c\log n$ layers have negligible influences in terms of local times.
\begin{lem}\label{RegulationAncestor}
Let $c>\frac{3}{\log\lambda}$, $\mu\in\mathbb R$, and $\mathbf T$ be an infinite tree satisfying the conditions in Lemma \ref{typical}.
For $n\rightarrow\infty$, with probability $1-o(1)$ under $\mathbf P_{\rm w}(\cdot|\mathbf T_n)$,
\begin{equation*}
\max_{|x|=n-c\log n} \left|\frac{L^x_n(t_n^\mu)-t_n^\mu}{\sigma_n^2}\right|\lesssim \frac{1}{\sqrt n}.
\end{equation*}
\end{lem}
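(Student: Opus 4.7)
The plan is to combine Lemma \ref{LTcharacter} (which identifies $L_n^x(t_n^\mu)$ as a $\mathtt{PG}$ variable) with a sub-Gaussian Chernoff estimate, and then a union bound over the vertices at generation $n - c\log n$.

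First, fixing $x$ with $|x| = n - c\log n$, Lemma \ref{LTcharacter} gives $L_n^x(t_n^\mu) \sim \mathtt{PG}(a_x, b_x)$ with $a_x = t_n^\mu/\sigma_{|x|}^2$ and $b_x = 1/\sigma_{|x|}^2$, hence mean $t_n^\mu$ and variance $2 t_n^\mu \sigma_{|x|}^2$. Using (\ref{211}) of Lemma \ref{typical}, $\log Z_n = n\log m + \log W + o(1)$, so $t_n^\mu = (n\log m + O(1))\sigma_n^2$; and by direct computation $\sigma_{|x|}^2/\sigma_n^2 = n^{-c\log\lambda}(1+o(1))$. Thus the variance is of order $n^{1-c\log\lambda}\sigma_n^4$, already smaller than the target $(\sigma_n^2/\sqrt n)^2 = \sigma_n^4/n$ as soon as $c\log\lambda > 2$; however Chebyshev's inequality alone is too weak to survive the exponentially large union bound below, so a Chernoff estimate is required.

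Second, I would derive a sub-Gaussian tail bound for $Y \sim \mathtt{PG}(a,b)$ starting from its moment generating function
$$\mathbb E[e^{\theta Y}] = \exp\left(\frac{a\theta}{b - \theta}\right),\qquad \theta < b,$$
which yields $\log \mathbb E[e^{\theta(Y - a/b)}] = a\theta^2/(b(b-\theta))$. Optimizing the Chernoff bound $\mathbb P(Y - a/b > \epsilon) \le \exp(-\theta\epsilon + a\theta^2/(b(b-\theta)))$ over $\theta \in (0, b)$ (with the matching calculation for the lower tail via $\mathbb E[e^{-\theta Y}]$), and verifying that the optimal $\theta^* \sim \epsilon b^2/(2a)$ is comfortably $o(b)$ in our regime, gives
$$\mathbb P(|Y - a/b| > \epsilon) \le 2\exp\left(-\frac{\epsilon^2 b^2}{4a}(1 + o(1))\right).$$
Plugging $\epsilon = \sigma_n^2/\sqrt n$ together with the orders of $a_x, b_x, t_n^\mu$ from the previous paragraph, the exponent works out to $\asymp n^{c\log\lambda - 2}$.

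Finally, by (\ref{213}) of Lemma \ref{typical}, $Z_{n-c\log n} \lesssim Z_n \asymp m^n$, and a union bound gives
$$\mathbf P_{\rm w}\left(\max_{|x| = n-c\log n}\frac{|L_n^x(t_n^\mu) - t_n^\mu|}{\sigma_n^2} > \frac{C}{\sqrt n}\middle|\mathbf T_n\right) \lesssim m^n\, e^{-c_1 C^2\, n^{c\log\lambda - 2}},$$
which tends to $0$ exactly when $c\log\lambda - 2 > 1$, i.e. when $c > 3/\log\lambda$, matching the hypothesis. The main technical care needed, as I anticipate, is in checking that the optimal Chernoff parameter $\theta^*$ stays well below $b_x$ (so that the factor $(1+o(1))$ in the Gaussian approximation of the exponent is legitimate) and that the resulting exponent $n^{c\log\lambda - 2}$ indeed dominates $n\log m$ under the sharp hypothesis $c > 3/\log\lambda$. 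The remainder is routine asymptotic manipulation of $\sigma_n^2$, $\sigma_{|x|}^2$ and $t_n^\mu$ using Lemma \ref{typical}.
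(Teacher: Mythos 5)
Your proposal is correct and reaches the sharp threshold $c > 3/\log\lambda$, but it follows a genuinely different route from the paper. The paper instead applies the second Ray--Knight theorem (Theorem \ref{rk}): since $L^x_n(t_n^\mu) + \eta_x^2 \overset{d}{=} (\eta_x + \sqrt{t_n^\mu})^2$, the deviation $|\sqrt{L^x + \eta_x^2} - \sqrt{t_n^\mu}|$ is controlled by $\max_{|x|=n-c\log n}|\eta_x|$, which is bounded via the Gaussian tail estimate of Lemma \ref{extremeGFF}; then a short algebraic manipulation converts the bound on $\sqrt{L^x+\eta_x^2}-\sqrt{t_n^\mu}$ into the desired bound on $L^x - t_n^\mu$. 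Your direct approach extracts a two-sided sub-Gaussian Chernoff bound from the $\mathtt{PG}$ moment generating function $\exp(a\theta/(b-\theta))$ in Lemma \ref{LTcharacter}, checks that the optimizing $\theta^*\asymp \epsilon b^2/(2a)$ is $o(b)$ so that the quadratic approximation of the cumulant is valid, and then applies a union bound over the $Z_{n-c\log n}\lesssim m^n$ vertices. Both yield the exponent $\asymp n^{c\log\lambda-2}$, which dominates the $n\log m$ cost of the union bound exactly when $c>3/\log\lambda$. The paper's choice is natural in context because it already introduces the DGFF machinery (Definition \ref{gff}, Lemma \ref{extremeGFF}, Theorem \ref{rk}) for the comparisons in Remark \ref{gffcontrast}; your argument is more self-contained and elementary, avoiding the Ray--Knight isomorphism entirely by leaning only on Lemma \ref{LTcharacter}. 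One small thing worth making explicit in a final write-up: the lower-tail Chernoff bound via $\mathbb E[e^{-\theta Y}] = \exp(-a\theta/(b+\theta))$ gives the slightly better exponent $a\theta^2/(b(b+\theta))$, so the two-sided constant $4$ you wrote is safe; also note that the paper's one-sided Lemma \ref{PGextreme}(2) is tailored to the threshold $X\le 1$ used in Lemma \ref{StructureOfF} and would not directly give the two-sided concentration you need here, so deriving the fresh Chernoff bound as you do is the right move.
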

\begin{proof}
Let $(\eta_x)$ be a DGFF on $\mathbf T$ independent of $\mathbf P_{\rm w}(\cdot|\mathbf T_n)$. Denote the joint law of the DGFF and $\mathbf P_{\rm w}(\cdot|\mathbf T_n)$ by $\mathbb P(\cdot|\mathbf T_n)$ in this proof.

It is guaranteed by Lemma \ref{typical} that $\log Z_n+\mu>0$ for $n$ large enough. Then one can apply Lemma \ref{extremeGFF},
\begin{equation*}
\mathbb P\left(\max_{|x|=n-c\log n}|\eta_x|\le\sigma_{n-c\log n}\sqrt{\log Z_{n-c\log n}+\mu}\middle|\mathbf T_n\right)=1-o(1).
\end{equation*}
Moreover, by Theorem \ref{rk},
\begin{equation*}
\mathbb P\left(\max_{|x|=n-c\log n}\left|\sqrt{L^x(t_n^\mu)+\eta_x^2}-\sqrt {t_n^\mu}\right|\le\sigma_{n-c\log n}\sqrt{\log Z_{n-c\log n}+\mu}\middle|\mathbf T_n\right)=1-o(1).
\end{equation*}

Thus with probability $1-o(1)$, for $n$ large enough,
\begin{equation*}
\begin{aligned}
&\quad\;\max_{|x|=n-c\log n}|L^x(t_n^\mu)-t_n^\mu|\\
&=\max_{|x|=n-c\log n}\left|\left(\sqrt{L^x(t_n^\mu)+\eta_x^2}-\sqrt{t_n^\mu}\right)\left(\sqrt{L^x(t_n^\mu)+\eta_x^2}+\sqrt{t_n^\mu}\right)-\eta_x^2\right|\\
&\le\max_{|x|=n-c\log n}\left|\sqrt{L^x(t_n^\mu)+\eta_x^2}-\sqrt {t_n^\mu}\right|^2\\
&\qquad\qquad\quad+2\sqrt{t^\mu_n}\max_{|x|=n-c\log n}\left|\sqrt{L^x(t_n^\mu)+\eta_x^2}-\sqrt {t_n^\mu}\right|+\max_{|x|=n-c\log n}|\eta_x|\\
&\lesssim \sigma_n^2\lambda^{-c\log n/2}\left(\log Z_{n}+\mu\right)\lesssim \sigma_n^2n^{-1/2},
\end{aligned}
\end{equation*}
where the last line follows from Lemma \ref{typical} and the condition $c>\frac{3}{\log\lambda}.$
\end{proof}

Now we present the key observation that, non-visited vertices up to time $t^\mu_n$ have distinct ancestors at generation $n-c\log n$.
\begin{lem}\label{StructureOfF}
Let $c>\frac{3}{\log\lambda}$, $\mu\in\mathbb R$, and $\mathbf T$ be an infinite tree satisfying the conditions in Lemma \ref{typical}.
There exists $\epsilon>0$ such that 
$$\max_{\substack{|y\wedge z|\ge n-c\log n+1,\\|y|=|z|=n,y\ne z}}\mathbf P_{\rm w}\left(L_n^y(t_n^\mu)=L^z_n(t_n^\mu)=0\,\middle|\,(L_{n}^x(t_n^\mu))_{|x|=n-c\log n},\mathbf T_n\right)=o\left(Z_n^{-1-\epsilon}\right),$$
uniformly in any choices of local times at generation $n-c\log n$ satisfying the conditions in Lemma \ref{RegulationAncestor}.
\end{lem}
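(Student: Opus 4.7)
The plan is to reduce, via the tree-Markov property of local times, the conditional probability to an explicit two-level computation using Lemma \ref{LTcharacter} and the Laplace transform of $\mathtt{PG}$. Let $x_0$ denote the common ancestor of $y$ and $z$ at generation $n-c\log n$, and $w := y \wedge z$, so that $n-c\log n + 1 \le |w| \le n-1$ (the upper bound because $y \ne z$ and $|y|=|z|=n$). Conditioning on the full collection $(L_n^x(t_n^\mu))_{|x|=n-c\log n}$ affects $(L_n^y, L_n^z)$ only through $s := L_n^{x_0}(t_n^\mu)$, which by Lemma \ref{RegulationAncestor} is constrained to $t_n^\mu(1 + O(n^{-1/2}))$ uniformly in the admissible configurations.

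Conditional on $L_n^w(t_n^\mu) = u$, the sub-excursions into the two disjoint children-subtrees of $w$ containing $y$ and $z$ are independent, and each of $L_n^y$, $L_n^z$ is $\mathtt{PG}\bigl(u/(\sigma_n^2-\sigma_{|w|}^2),\,1/(\sigma_n^2-\sigma_{|w|}^2)\bigr)$ by Lemma \ref{LTcharacter}. The zero-mass identity in Lemma \ref{PGextreme}(1) then yields
$$\mathbf P_{\rm w}\bigl(L_n^y = L_n^z = 0 \bigm| L_n^w = u,\, \mathbf T_n\bigr) \;=\; \exp\!\left(-\frac{2u}{\sigma_n^2 - \sigma_{|w|}^2}\right).$$
Integrating against $L_n^w \mid L_n^{x_0} = s \,\sim\, \mathtt{PG}(s/A, 1/A)$ with $A = \sigma_{|w|}^2 - \sigma_{|x_0|}^2$, using the Laplace transform $\mathbb E[e^{-\theta X}] = \exp\bigl(-\alpha\theta/(\beta+\theta)\bigr)$ for $X \sim \mathtt{PG}(\alpha,\beta)$ applied at $\theta = 2/(\sigma_n^2 - \sigma_{|w|}^2)$, the algebra collapses to the clean identity
$$\mathbf P_{\rm w}\bigl(L_n^y = L_n^z = 0 \bigm| L_n^{x_0} = s,\, \mathbf T_n\bigr) \;=\; \exp\!\left(-\frac{2s}{\sigma_n^2 + \sigma_{|w|}^2 - 2\sigma_{|x_0|}^2}\right).$$

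To finish, I bound the denominator above. Since $|w| \le n-1$, $\sigma_{|w|}^2 \le \sigma_{n-1}^2 = \sigma_n^2/\lambda + O(1)$, while the hypothesis $c > 3/\log\lambda$ forces $\sigma_{|x_0|}^2/\sigma_n^2 = O(\lambda^{-c\log n}) = o(n^{-3})$. Combined with $s \ge t_n^\mu(1 - O(n^{-1/2}))$, the exponent satisfies
$$\frac{2s}{\sigma_n^2 + \sigma_{|w|}^2 - 2\sigma_{|x_0|}^2} \;\ge\; \frac{2\lambda}{\lambda+1}(1 - o(1))(\log Z_n + \mu).$$
Since $\lambda > 1$, the factor $\frac{2\lambda}{\lambda+1} = 1 + \frac{\lambda-1}{\lambda+1}$ is strictly greater than $1$, so any choice $\epsilon \in \bigl(0,\,\tfrac{\lambda-1}{\lambda+1}\bigr)$ delivers the claimed bound $o(Z_n^{-1-\epsilon})$, uniformly over $y, z$ with $|y\wedge z| \ge n - c\log n + 1$ and over admissible $s$. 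The substantive point (rather than a true obstacle) is the identification of the denominator $\sigma_n^2 + \sigma_{|w|}^2 - 2\sigma_{|x_0|}^2$: the appearance of $\sigma_{|w|}^2$ in the middle term quantifies exactly the correlation induced between $L_n^y$ and $L_n^z$ by their shared descent from $w$, and it replaces the naive full-independence exponent $2\log Z_n$ with $\frac{2\lambda}{\lambda+1}\log Z_n$. It is precisely the strict bias $\lambda > 1$ that keeps this above $\log Z_n$; for the simple random walk $\lambda = 1$ the factor would collapse to $1$ and the lemma would fail, which is a qualitative difference between the biased and unbiased settings.
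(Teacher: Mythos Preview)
Your proof is correct and, in fact, cleaner than the paper's own argument. The paper proceeds by splitting on the event $\{L^w < 2\delta t_n^\mu\}$: on the small-local-time event it uses the Chernoff-type tail bound of Lemma~\ref{PGextreme}(2), and on its complement it bounds $\mathbf P(L^z=0\mid L^w)\,\mathbf P(L^y=0\mid R_n)$ separately. This yields two exponents, $\lambda(\sqrt{1-\delta}-\sqrt{2\delta\lambda/(\lambda-1)})^2$ and $1+\delta$, and the paper then has to check that some choice of $\delta>0$ makes both exceed $1$.

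You bypass this case split entirely: by integrating $\exp(-2u/(\sigma_n^2-\sigma_{|w|}^2))$ against the explicit $\mathtt{PG}$ law of $L^w$ via its Laplace transform, you obtain the closed-form exponent $2s/(\sigma_n^2+\sigma_{|w|}^2-2\sigma_{|x_0|}^2)$, which is then bounded below by $\frac{2\lambda}{\lambda+1}(\log Z_n+\mu)(1-o(1))$ using only $|w|\le n-1$. This is both shorter and yields the sharp leading constant $\frac{2\lambda}{\lambda+1}$ (any $\epsilon<\frac{\lambda-1}{\lambda+1}$ works), whereas the paper's argument gives a smaller, implicit $\epsilon$ depending on the auxiliary parameter $\delta$. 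The tree-Markov reduction you invoke---that the conditioning on all of generation $n-c\log n$ factors through the single ancestor $x_0$---is exactly what the paper uses as well, so there is no hidden gap there.
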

\begin{proof}
For simplicity, the conditional probability $\mathbf P_{\rm w}\left(\cdot|(L_{n}^x(t_n^\mu))_{|x|=n-c\log n},\mathbf T_n\right)$ is abbreviated as $\mathbf P_{\rm w}(\cdot|R_n)$, and we write $L^x$ for the local time $L^x_n(t^\mu_n)$.

Let $\delta>0$. Fix two vertices $y,z$ at generation $n$ of $\mathbf T_n$, such that they have common ancestor $w=y\wedge z$ after generation $n-c\log n$. Let $|w|=n-s\ge n-c\log n+1$, and denote by $x$ the ancestor of $w$ with $|x|=n-\log n$. Then
\begin{equation*}
\begin{aligned}
&\quad\;\mathbf P_{\rm w}\left(L^y=L^z=0\,\middle|\,R_n\right)\\
&\le \mathbf P_{\rm w}\left(L^y=L^z=0,L^w\ge 2\delta t_n^\mu\,\middle|\,R_n\right)+\mathbf P_{\rm w}\left(L^w< 2\delta t_n^\mu\,\middle|\,R_n\right)\\
&=\mathbf P_{\rm w}\left(L^z=0\,\middle|\,L^y=0,L^w\ge2\delta t_n^\mu,R_n\right)\mathbf P_{\rm w}\left(L^y=0,L^w\ge 2\delta t^\mu_n\,\middle|\,R_n\right)+\mathbf P_{\rm w}\left(L^w< 2\delta t_n^\mu\,\middle|\,R_n\right)\\
&=\mathbf P_{\rm w}\left(L^z=0\,\middle|\,L^w\ge2\delta t_n^\mu,R_n\right)\mathbf P_{\rm w}\left(L^y=0,L^w\ge 2\delta t^\mu_n\,\middle|\,R_n\right)+\mathbf P_{\rm w}\left(L^w< 2\delta t_n^\mu\,\middle|\,R_n\right)\\
&\le \mathbf P_{\rm w}\left(L^z=0\,\middle|\,L^w=2\delta t_n^\mu,R_n\right)\mathbf P_{\rm w}\left(L^y=0\,\middle|\,R_n\right)+\mathbf P_{\rm w}\left(L^w< 2\delta t_n^\mu\,\middle|\,R_n\right).
\end{aligned}
\end{equation*}

By Lemma \ref{RegulationAncestor}, for $n$ large enough we have
$$L^x>(1-\delta)t_n^\mu,$$
moreover, by Lemma \ref{LTcharacter}, 
$$(L^w|L^x)\overset{d}{=}\mathtt{PG}\left(\frac{L^x}{\sigma^2_{n-s}-\sigma^2_{n-c\log n}},\frac{1}{\sigma^2_{n-s}-\sigma^2_{n-c\log n}}\right).$$
where denominator above is bounded by
$$\frac{\lambda-1}{\lambda}\sigma^2_{n-s}\le\sigma^2_{n-s}-\sigma^2_{n-s-1}\le\sigma^2_{n-s}-\sigma^2_{n-c\log n}\le \sigma^2_{n-s}.$$
Therefore, for local times of the generation $n-c\log n$ satisfying Lemma \ref{RegulationAncestor} and for $n$ large enough, 
\begin{equation*}
\begin{aligned}
&\quad\;\mathbf P_{\rm w}\left(L^w<2\delta t_n^\mu\,\middle|\,R_n\right)\\
&=\mathbf P_{\rm w}\left(\mathtt{PG}\left(\frac{L^x}{\sigma^2_{n-s}-\sigma^2_{n-c\log n}},\frac{1}{\sigma^2_{n-s}-\sigma^2_{n-c\log n}}\right)< 2\delta t_n^\mu\,\middle|\,R_n\right)\\
&\le \mathbb P\left(\mathtt{PG}\left(\frac{(1-\delta)t_n^\mu}{\sigma^2_{n-s}},\frac{\lambda/(\lambda-1)}{\sigma^2_{n-s}}\right)\le 2\delta t_n^\mu\right)\\&\le e^{-\lambda\left(\sqrt{1-\delta}-\sqrt{2\delta\lambda/(\lambda-1)}\right)^2( \log Z_n+\mu)},
\end{aligned}
\end{equation*} 
where the notation $\mathbb P(\mathtt{PG}(a,b)\le x)$ stands for the probability that a random variable  distribution as $\mathtt{PG}(a,b)$ is smaller than $x$, and the last line is due to Lemma \ref{PGextreme}.

Similarly, study $(L^z|L^w)$ and $(L^y|L^x)$ by Lemma \ref{LTcharacter}, for $n$ large enough,
\begin{equation*}
\begin{aligned}
&\quad\;\mathbf P_{\rm w}\left(L^z=0\,\middle|\,L^w=2\delta t_n^\mu,R_n\right)\mathbf P_{\rm w}\left(L^y=0\,\middle|\,R_n\right)\\
&=e^{-\frac{2\delta t_n^\mu}{\sigma_n^2-\sigma^2_{n-s}}-\frac{L^x}{\sigma^2_n-\sigma^2_{n-c\log n}}}\le e^{-\frac{2\delta t_n^\mu}{\sigma_n^2}-\frac{(1-\delta)t_n^\mu}{\sigma^2_n}}= e^{-(1+\delta)(\log Z_n+\mu)}.
\end{aligned}
\end{equation*}

To show that the two probabilities above are bounded by $o\left(Z_n^{-1-\epsilon}\right)$, it suffices to choose $\delta,\epsilon>0$ such that
\begin{equation}\label{lowerbounddelta}
\begin{aligned}
\lambda \left(\sqrt{1-\delta}-\sqrt{2\delta\lambda/(\lambda-1)}\right)^2> 1+\epsilon,\\
1+\delta> 1+\epsilon,
\end{aligned}
\end{equation}
which is always possible when $\lambda>1$.
\end{proof}

The estimate above is enough to control the cover time of the $n$-th generation, however a Galton-Watson tree may have leaves in younger generations, in other words, the walk visits all vertices in the $n$-th generation does not guarantee that it covers $\mathbf T_n$. We treat leaves before the generation $n-1$ separately.
\begin{lem}\label{badgwcase}
Let $\mu\in\mathbb R$, and let $\mathbf T$ be an infinite tree satisfying the conditions in Lemma \ref{typical}, then
$$\mathbf P_{\rm w}\left(\exists x\in\mathbf T_n,|x|\le n-1, L^x_n(t_n^\mu)=0\middle| \mathbf T_n\right)=o(1).$$
\end{lem}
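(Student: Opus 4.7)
The plan is to estimate, for each single vertex $x$ of height $k \le n-1$, the probability that $L^x_n(t_n^\mu) = 0$, and then to apply a union bound, exploiting the fact that vertices above the last generation get visited many more times than vertices on the leaf generation (by a factor exponential in $n - k$).

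By Lemma \ref{LTcharacter} we have $L^x_n(t_n^\mu) \sim \mathtt{PG}(t_n^\mu/\sigma_{|x|}^2, 1/\sigma_{|x|}^2)$, and by Lemma \ref{PGextreme}(1),
$$
\mathbf P_{\rm w}\!\left(L^x_n(t_n^\mu) = 0 \,\middle|\, \mathbf T_n\right) \;=\; \exp\!\left(-\frac{t_n^\mu}{\sigma_{|x|}^2}\right) \;=\; \exp\!\left(-\frac{\sigma_n^2}{\sigma_{|x|}^2}(\log Z_n + \mu)\right).
$$
A direct calculation with $\sigma_n^2 = (\lambda^{n+1}-1)/(\lambda-1)$ gives $\sigma_n^2/\sigma_k^2 \ge \lambda^{n-k}$ for all $0 \le k \le n$. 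Moreover, Lemma \ref{typical} provides $Z_n \gtrsim W m^n$ so that $\log Z_n + \mu \ge n \log m - C$ for some $C = C(\mathbf T,\mu)$ and all $n$ large, as well as $Z_k \lesssim W m^k$ uniformly in $k$.

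A union bound over all vertices of generation $k$, then over $k \in \{0, 1, \ldots, n-1\}$, yields
$$
\mathbf P_{\rm w}\!\left(\exists x\in\mathbf T_n,\, |x|\le n-1,\, L^x_n(t_n^\mu) = 0 \,\middle|\, \mathbf T_n\right) \;\lesssim\; \sum_{k=0}^{n-1} m^k \exp\!\left(-\lambda^{n-k}(n\log m - C)\right).
$$
The dominant contribution is the term $k = n-1$, which is bounded, up to constants, by $m^{n-1} \exp(-\lambda\,n\log m) = m^{-(\lambda-1)n - 1}$, and this goes to $0$ as $n \to \infty$ since $\lambda > 1$. For $k \le n-2$ the penalty $\exp(-\lambda^{n-k}\, n\log m)$ is doubly exponentially small in $n - k$ while $m^k \le m^n$, so these tail terms sum to something of strictly smaller order.

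The argument is essentially a one-step union bound, so there is no real obstacle: the only thing to verify carefully is the elementary inequality $\sigma_n^2/\sigma_k^2 \ge \lambda^{n-k}$ and that the hypothesis $\lambda > 1$ is precisely what makes the $k = n-1$ term summable. Note that we do not need the finer estimate (\ref{211}) on $Z_n$, only the two-sided bound $Z_k \asymp W m^k$; and the fact that $\mathbf T$ is a subtree of a surviving Galton--Watson tree (possibly with leaves at intermediate heights) is irrelevant for this computation, since Lemma \ref{LTcharacter} holds on any infinite tree and we simply union-bound over whichever vertices happen to lie in $\mathbf T_n$.
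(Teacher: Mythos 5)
Your proof is correct and takes essentially the same approach as the paper: both use Lemma \ref{LTcharacter} together with Lemma \ref{PGextreme}(1) to get $\mathbf P_{\rm w}(L^x_n(t_n^\mu)=0|\mathbf T_n)=e^{-(\sigma_n^2/\sigma_{|x|}^2)(\log Z_n+\mu)}$, followed by a union bound over $|x|\le n-1$. The paper's version is slightly more streamlined: it uses the uniform bound $\sigma_n^2/\sigma_{|x|}^2\ge\lambda$ for all $|x|\le n-1$, extracts a common factor $e^{-\lambda(\log Z_n+\mu)}$, and invokes the aggregate estimate $\sum_{i=0}^{n-1}Z_i\lesssim Z_n$ from (\ref{213}) to conclude $Z_n^{1-\lambda}\to 0$, whereas you keep the sharper bound $\lambda^{n-k}$ and argue generation by generation, which is also fine but requires the extra (glossed over, though easy) verification that the $k\le n-2$ terms are negligible.
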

\begin{proof}
By Lemma \ref{LTcharacter} and Lemma \ref{PGextreme} (1), each $x\in\mathbf T_n$ with $|x|\le n-1$ is not visited at excursion time $t_n^\mu$ with probability 
$$\mathbf P_{\rm w}(L_n^x(t_n^\mu)=0|\mathbf T_n)={e^{-{\sigma_{|x|}^{-2}}{t_n^\mu}}}\le e^{-\lambda(\log Z_n+\mu)},$$ thus by the union bound,
\begin{equation*}
\begin{aligned}
\mathbf P_{\rm w}\left(\exists x\in\mathbf T_n,|x|\le n-1, L^x_n(t_n^\mu)=0\middle| \mathbf T_n\right)&\le e^{-\lambda(\log Z_n+\mu)}\sum_{|x|\le n-1} 1\\
&=e^{-\lambda(\log Z_n+\mu)}\sum_{i=0}^{n-1} Z_i,
\end{aligned}
\end{equation*}
and the conclusion follows from (\ref{213}).
\end{proof}

Returning to the $n$-th generation, by Lemma \ref{StructureOfF}, the non-visited vertices are almost independent at excursion time $t_n^\mu$. Intuitively, the time to cover them is a binomial random variable of $Z_{n-c\log n}$ trials, converging to a Poisson distribution. We conclude upon this intuition:
\begin{prop}  \label{mainexcursion}
Let $\mu\in\mathbb R$, and let $\mathbf T$ be an infinite tree satisfying the conditions in Lemma \ref{typical}, recall $t_n^\mu$ defined in (\ref{aboutcover}), we have
\begin{gather}
\label{341}\mathcal \#\left\{|x|=n,L_n^x(t_n^\mu)=0\right\}\overset{d}{\rightarrow}\mathtt{Poisson}(e^{-\mu})\text{ under }\mathbf P_{\rm w}(\cdot|\mathbf T_n), \\
\label{342}\mathbf P_{\rm w}\left(\frac{t_n^{cov}}{\sigma_n^2}-n\log m-\log W\le \mu\middle|\mathbf T_n\right)\rightarrow e^{-e^{-\mu}}.
\end{gather}
\end{prop}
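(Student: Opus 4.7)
The plan is to prove (\ref{341}) via the method of factorial moments and to deduce (\ref{342}) from (\ref{341}) together with Lemma \ref{badgwcase} and the asymptotic $\log Z_n=n\log m+\log W+o(1)$ provided by (\ref{211}). Write $N_n:=\#\{|x|=n:L_n^x(t_n^\mu)=0\}$; since a Poisson law is determined by its factorial moments, (\ref{341}) reduces to showing
\begin{equation*}
\mathbf E_{\rm w}\bigl[N_n(N_n-1)\cdots(N_n-k+1)\bigm|\mathbf T_n\bigr]\longrightarrow e^{-k\mu}
\end{equation*}
for every integer $k\ge 1$. The $k=1$ case is immediate: by Lemma \ref{LTcharacter} and Lemma \ref{PGextreme}(1), each $x$ with $|x|=n$ satisfies $\mathbf P_{\rm w}(L_n^x(t_n^\mu)=0\mid\mathbf T_n)=e^{-t_n^\mu/\sigma_n^2}=e^{-\mu}/Z_n$, and summation over the $Z_n$ such vertices yields $e^{-\mu}$.

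For $k\ge 2$, I would decompose the sum
\begin{equation*}
\sum_{\substack{y_1,\dots,y_k\\|y_i|=n,\,y_i\ne y_j}}\mathbf P_{\rm w}\bigl(L_n^{y_1}(t_n^\mu)=\cdots=L_n^{y_k}(t_n^\mu)=0\bigm|\mathbf T_n\bigr)
\end{equation*}
into a \emph{close} class, where some pair $y_i,y_j$ satisfies $|y_i\wedge y_j|\ge n-c\log n+1$, and a \emph{far} class, where the ancestors $x_i$ of $y_i$ at generation $n-c\log n$ are pairwise distinct. For the close class, (\ref{212}) bounds the number of close pairs by $\sum_{|x|=n-c\log n}(Z_n^x)^2\le Z_n^{1+\epsilon'}$ and hence the number of such $k$-tuples by $\lesssim Z_n^{k-1+\epsilon'}$, while Lemma \ref{StructureOfF} bounds the per-tuple probability by $o(Z_n^{-1-\epsilon})$; taking $\epsilon>\epsilon'$ makes the close contribution $o(1)$. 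For the far class, I would condition on $R_n=(L_n^x(t_n^\mu))_{|x|=n-c\log n}$ and exploit the Markov property of the walk across disjoint subtrees (exactly as in the proof of Lemma \ref{LTcharacter}) to factorize, each factor being $\exp\bigl(-L_n^{x_i}(t_n^\mu)/(\sigma_n^2-\sigma_{n-c\log n}^2)\bigr)$ by Lemma \ref{LTcharacter}.

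The main obstacle is controlling the errors in these exponents uniformly on the high-probability event of Lemma \ref{RegulationAncestor} so that each factor matches $e^{-\mu}/Z_n$. Lemma \ref{RegulationAncestor} gives $L_n^{x_i}(t_n^\mu)/t_n^\mu=1+O(n^{-3/2})$, and $c>3/\log\lambda$ makes $\sigma_{n-c\log n}^2/\sigma_n^2=O(\lambda^{-c\log n})$ negligible; both errors propagate into the exponent as an $O(n^{-1/2})$ additive perturbation of $\log Z_n+\mu$, yielding $(e^{-\mu}/Z_n)(1+o(1))$ per factor. Together with the count of far tuples being $Z_n^k(1+o(1))$, this gives the factorial moment $e^{-k\mu}(1+o(1))$, hence (\ref{341}), and in particular $\mathbf P_{\rm w}(N_n=0\mid\mathbf T_n)\to e^{-e^{-\mu}}$. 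Combined with Lemma \ref{badgwcase}, which shows that no vertex of generation $<n$ is unvisited by excursion time $t_n^\mu$ with high probability, one obtains $\mathbf P_{\rm w}(t_n^{cov}\le t_n^\mu\mid\mathbf T_n)\to e^{-e^{-\mu}}$. Finally (\ref{211}) gives $t_n^\mu/\sigma_n^2=n\log m+\log W+\mu+o(1)$, and a sandwich at $\mu\pm\varepsilon$ using the continuity of $\mu\mapsto e^{-e^{-\mu}}$ turns this into (\ref{342}).
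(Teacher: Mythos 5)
Your overall strategy (factorial moments plus a close/far decomposition of $k$-tuples at generation $n-c\log n$) is a genuinely different route from the paper, which instead works with the auxiliary count $\#E_n^\mu$ of subtrees at generation $n-c\log n+1$ containing \emph{exactly one} unvisited leaf. Given $R_n$ this is a sum of independent indicators, so its Laplace transform factorizes exactly, and the only approximation required is $\mathbf E_{\rm w}(\#F_n^\mu-\#E_n^\mu\mid R_n)=o(1)$, a first-moment bound. Your far-class treatment and the $k=1$ computation are correct and parallel the paper's expectation computation.

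There is, however, a genuine gap in your close-class bound for $k\ge 3$. You bound the number of close $k$-tuples by $Z_n^{k-1+\epsilon'}$ and the per-tuple probability by $o(Z_n^{-1-\epsilon})$ (the Lemma \ref{StructureOfF} bound for one close pair, discarding the remaining $k-2$ vertices). The product is $o(Z_n^{k-2+\epsilon'-\epsilon})$, which is $o(1)$ only when $k=2$; for $k\ge 3$ it diverges, since $\epsilon$ is a fixed constant produced by the constraints (\ref{lowerbounddelta}) and cannot be taken larger than, roughly, $\min(\delta,\lambda-1)$ — in particular it cannot grow with $k$, and for $\lambda$ close to $1$ it must be small. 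The natural repair — factorize the probability over the distinct subtrees and assign each extra far vertex its own factor of order $1/Z_n$ — only handles tuples with exactly one close pair and all other vertices far. Tuples where $\ell\ge 3$ of the $k$ vertices fall into a \emph{single} subtree are not controlled: their count involves $\sum_{|x|=n-c\log n}(Z_n^x)^\ell$, which Lemma \ref{typical} does not bound for $\ell\ge 3$, and Lemma \ref{StructureOfF} gives no per-tuple improvement beyond $o(Z_n^{-1-\epsilon})$. The cleanest fix — show that with probability $1-o(1)$ no subtree contains two unvisited leaves (a first-moment estimate from Lemma \ref{StructureOfF} and (\ref{212})), and on that event identify $N_n$ with the paper's $\#E_n^\mu$ before computing factorial moments — essentially collapses your argument into the paper's.

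The deduction of (\ref{342}) from (\ref{341}), Lemma \ref{badgwcase}, and (\ref{211}) is correct and matches the paper.
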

\begin{proof}
Since both the conclusions allow a $o(1)$ error, we may assume the conditions in Lemma \ref{RegulationAncestor} at generation $n-c\log n$ for an arbitrarily fixed $c>\frac{3}{\log\lambda}$. For simplicity, recall the abbreviation $\mathbf P_{\rm w}(\cdot|R_n)$ in the proof of Lemma \ref{StructureOfF}, then it suffices replace $\mathbf P_{\rm w}(\cdot|\mathbf T_n)$ by $\mathbf P_{\rm w}(\cdot|R_n)$.
Moreover, denote by
$$F_n^\mu=\left\{|x|=n,L_n^x(t_n^\mu)=0\right\},E_n^\mu=\left\{|x|=n-c\log n+1,\#(\mathbf T^{x}_n\cap F_n^\mu)=1\right\},$$
the goal is to estimate $F_n^\mu$, which is achieved by comparison to $E_n^\mu$.

By Lemma \ref{LTcharacter},  under $\mathbf P_w(\cdot|R_n)$, local times are distributed as $$(L^y_n(t^\mu_n)|R_n)\sim\mathtt{PG}\left(\frac{L_n^{x}(t_n^\mu)}{\sigma^2_n-\sigma^2_{n-c\log n}},\frac{1}{\sigma^2_n-\sigma^2_{n-c\log n}}\right)$$ 
for each pair of vertices 
$$x\prec y,|x|=n-c\log n,|y|=n.$$ 
Therefore, by Lemma \ref{RegulationAncestor},
\begin{equation*}
\begin{aligned}
\mathbf E_{\rm w}\left(\#F_n^\mu\,\middle|\,R_n\right)
&=\sum_{|x|=n-c\log n}e^{-\frac{L^x_n(t_n^\mu)}{\sigma_n^2-\sigma_{n-c\log n}^2}}Z_n^x\\
&=\sum_{|x|=n-c\log n}e^{-\log Z_n-\mu+o(1)}Z_n^x=e^{-\mu+o(1)}.
\end{aligned}
\end{equation*}

Furthermore, by definition one has
$$0\le \#F_n^\mu-\#E_n^\mu\le 2\sum_{|x|=n-c\log n+1}\sum_{y,z\in \mathbf T^x_n}\mathbf 1_{\{y,z\in F_n^\mu\}},$$
where $\mathbf E_{\rm w}(\mathbf 1_{\{y,z\in F_n^\mu\}}|R_n)=o(Z_n^{-(1+\epsilon)})$
by Lemma \ref{StructureOfF}.

Therefore, by (\ref{212}),
\begin{equation*}
\begin{aligned}
\mathbf E_{\rm w}\left(\#E_n^\mu\,\middle|\,R_n\right)&=\mathbf E_{\rm w}(\#F_n^\mu|R_n)+\mathbf E_{\rm w}(\#E_n^\mu-\#F_n^\mu|R_n)\\
&=e^{-\mu+o(1)}+o\left(Z_n^{-(1+\epsilon)}\sum_{|x|=n-c\log n+1}(Z^x_n)^2\right)\\&=e^{-\mu+o(1)}+o(1),
\end{aligned}
\end{equation*}
where $\epsilon$ is the parameter in Lemma \ref{StructureOfF}.

Moreover, conditioned on local times of layer $n-c\log n$, the subtrees $(\mathbf T^x)_{|x|=n-c\log n+1}$ are independent, thus for any $\theta>0$, the Laplace transform of $\#E_n^\mu$ is given by
\begin{equation*}
\begin{aligned}
&\;\quad\mathbf E_{\rm w}\left(e^{-\theta\#E_n^\mu}\,\middle|\,R_n\right)\\
&=\prod_{|x|=n-c\log n+1}\mathbf E_{\rm w}\left(e^{-\theta\cdot\mathbf 1_{x\in E_n^\mu}}\,\middle|\,R_n\right)\\
&=\prod_{|x|=n-c\log n+1}\left(1-\left(1-e^{-\theta}\right)\mathbf P_{\rm w}\left(x\in E_n^\mu\,\middle|\,R_n\right)\right)\\
&=\prod_{|x|=n-c\log n+1}e^{-\left(1-e^{-\theta}+o(1)\right)\mathbf P_{\rm w}\left({x\in E_n^\mu}\middle|R_n\right)}\\
&=e^{-\left(1-e^{-\theta}+o(1)\right)\mathbf E_{\rm w}\left(\#E_n^\mu\,\middle|\,R_n\right)}\rightarrow e^{-(1-e^{-\theta})e^{-\mu}},
\end{aligned}
\end{equation*}
in other words
$$\#E_n^\mu\overset{d}{\rightarrow}\mathtt{Poisson}(e^{-\mu}).$$
Finally, by Lemma \ref{StructureOfF} and the union bound again, we have
$$\mathbf P_{\rm w}\left(\#E_n^\mu=\#F_n^\mu\middle|R_n\right)=1-o(1),$$
thus $\#F_n^\mu$ has the same distributional limit as $\#E_n^\mu$, completing the proof of (\ref{341}).

As for (\ref{342}), $\{t^{cov}_n\le t_n^\mu\}$ differs from $\{\#F_n^\mu=0\}$ by whether the leaves of $\mathbf T_n$ in early generations are covered, which is controlled in Lemma \ref{badgwcase}. Therefore, by (\ref{341}),
$$\mathbf P_{\rm w}\left(t^{cov}_n\le t_n^\mu|R_n\right)=\mathbf P_{\rm w}(\#F_n^\mu=0|R_n)+o(1)\rightarrow e^{-e^{-\mu}},$$
then (\ref{342}) follows from the asymptotic of $t_n^\mu$, $$\frac{t_n^\mu}{\sigma_n^2}-n\log m-\log W\rightarrow \mu.$$
\end{proof}

\begin{rmk}\label{gffcontrast}
\begin{enumerate}
\item For the complete binary tree, we have $m=2$. Take $\lambda\rightarrow 1$, then $\mathbf P_{\rm w}(\cdot|\mathbf T_n)$ converges to a simple random walk, our result gives (non-rigorously)
$$t_n^{cov}\approx n^2\log 2 +O(n),$$
whereas the cover time on the binary tree of a simple random walk is (cf. eg. \cite{drznew})
$$t^{cov}_n=n^2\log 2-n\log n+O(n).$$
Lack of the second order term $n\log n$ is due to a difference in extremal landscapes: recall the notations in the proof of Proposition \ref{mainexcursion}, in the case of a simple random walk \cite{2019}, the set $F_n^\mu$ is approximately identically distributed clusters indexed by $E_n^\mu$, whereas for the $\lambda$-biased walk, these clusters are single points instead.
\item Following exactly the same structure of the proof, one can study the maximum of a DGFF $(\eta_x)$ (recall Definition \ref{gff}),
$$\mathbb P\left(\max_{|x|=n}\eta_x\le\sigma_n\sqrt{\log Z_n-\frac{1}{2}\log\log Z_n+\mu}\right)\rightarrow \exp\left(-\frac{e^{-\mu}}{2\sqrt\pi}\right).$$
\item Comparing the cover time to the maximum of the corresponding DGFF, as suggested in \cite{JDcover}, \cite{JDsharp}, 
one has
\begin{equation*}
\begin{aligned}
t^{cov}_n&=\frac{\lambda^{n+1}}{\lambda-1}(n\log m+O(1)),\\
\max_{|x|=n}\eta_x^2&=\frac{\lambda^{n+1}}{\lambda-1}\left(n\log m-\frac{1}{2}\log n+O(1)\right).
\end{aligned}
\end{equation*}
This difference in second order is due to different tails of Gaussian and local time distributions. (Compare Lemma \ref{PGextreme} (2) and Lemma \ref{extremeGFF}.)
\end{enumerate}
\end{rmk}
\section{From excursion time to real time}\label{relation}
\begin{lem}\label{variancecal}
For any infinite tree $\mathbf T$ satisfying the conditions in Lemma \ref{typical},
let $s_n=\sum_{i=0}^n\frac{Z_i}{\lambda^i}$, $t>0$, then
\begin{align}
\label{411}\mathbf E_{\rm w}\left(\tau_n(t)\middle|\mathbf T_n\right)&=\mathbf E_{\rm w}\left(\sum_{x\in\mathbf T_n}\pi_n(x)L^x_n(t)\middle|\mathbf T_n\right)=2ts_n,\\
\label{412}\mathrm{Var}_{\rm w}(\tau_n(t)|\mathbf T_n)&=o\left(\frac{t\lambda^{n}s_n^2}{n}\right).
\end{align}
\end{lem}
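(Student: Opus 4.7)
The first equality in (\ref{411}) is the second line of (\ref{TtRelation}). For its value, Lemma~\ref{LTcharacter} gives $L_n^x(t) \sim \mathtt{PG}(t/\sigma_{|x|}^2,\,1/\sigma_{|x|}^2)$, so by Lemma~\ref{PGextreme}(1), $\mathbf E_{\rm w}(L_n^x(t)|\mathbf T_n) = t$, and $\mathbf E_{\rm w}(\tau_n(t)|\mathbf T_n) = t\sum_{x \in \mathbf T_n}\pi_n(x)$. Splitting the sum by height, using $\sum_{|x|=i}\nu_x = Z_{i+1}$ for $0\le i<n$ (while $\nu_x = 0$ at $i=n$ by truncation) and $\pi_n(\overleftarrow\varnothing)=1$, the mass telescopes to $2\sum_{i=0}^n Z_i/\lambda^i = 2 s_n$, proving (\ref{411}).

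For (\ref{412}) one starts from the conditional clause of Lemma~\ref{LTcharacter}, which gives $\mathbf E_{\rm w}(L_n^x(t)|L_n^y(t)) = L_n^y(t)$ whenever $y\preceq x$. The tree-Markov property (descendants of $z:=x\wedge y$ along different children of $z$ are conditionally independent given $L_n^z(t)$) then upgrades this to
$$\mathrm{Cov}_{\rm w}\bigl(L_n^x(t),L_n^y(t)\bigr) = \mathrm{Var}_{\rm w}\bigl(L_n^z(t)\bigr) = 2t\sigma^2_{|z|},$$
using Lemma~\ref{PGextreme}(1) for the variance. Writing $\sigma^2_{|z|}=\sum_{u\preceq z}\lambda^{|u|}$ and exchanging summations yields
\begin{equation*}
\mathrm{Var}_{\rm w}\bigl(\tau_n(t)\,\big|\,\mathbf T_n\bigr) = 2t\sum_{u \in \mathbf T_n}\lambda^{|u|}S(u)^2, \qquad S(u):=\sum_{x \succeq u,\,x\in \mathbf T_n}\pi_n(x),
\end{equation*}
and re-running the mean telescoping on the subtree $\mathbf T^u$ gives $S(u) = 2 s_n^{(u)} - \lambda^{-|u|} \le 2 s_n^{(u)}$ with $s_n^{(u)} := \sum_{i=|u|}^n Z_i^u/\lambda^i$.

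It remains to show $\sum_u \lambda^{|u|}(s_n^{(u)})^2 = o(\lambda^n s_n^2/n)$. Applying the subtree Kesten-Stigum relation (the analogue of (\ref{211})) one has $s_n^{(u)} \asymp W^u \lambda^{-|u|} T_{n-|u|}$ with $T_r := \sum_{j=0}^r (m/\lambda)^j$. Given the typical-tree bound $\sum_{|u|=k}(W^u)^2 \lesssim Z_k \asymp m^k$ uniformly in $0\le k\le n$, the expression reduces to $\sum_{k=0}^n (m/\lambda)^k T_{n-k}^2$. A case analysis then covers the three regimes: when $\lambda>m$ one has $T_r,s_n = O(1)$ and the sum is $O(1)$ against target $\lambda^n/n$; when $\lambda=m$ one has $T_r\asymp r$, $s_n\asymp n$ and the sum is $O(n^3)$ against target $m^n n$; when $\lambda<m$ one has $T_r\asymp (m/\lambda)^r$, $s_n\asymp(m/\lambda)^n$ and both the sum and the target are of order $(m/\lambda)^{2n}$ up to the ratio $n/\lambda^n$. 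In all cases the gain is a factor $n/\lambda^n$ or $n^2/m^n$, which vanishes under $(\mathbf H)$.

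The main obstacle is the uniform-in-$k$ almost-sure bound $\sum_{|u|=k}(W^u)^2 \lesssim Z_k$ (equivalently $\sum_{|u|=k}(Z_i^u)^2 \lesssim Z_k m^{2(i-k)}$ across the relevant range of $(k,i)$), which extends (\ref{212}) from the single level $k = n - c\log n$ to all levels. This is handled by combining $\mathbf E W^2 < \infty$ (a consequence of $\mathbf E Z_1^2 < \infty$ under $(\mathbf H)$) with a Borel-Cantelli argument in the spirit of the proof of (\ref{212}); once this concentration is secured, the remaining work is routine geometric-sum bookkeeping.
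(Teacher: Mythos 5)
Your derivation of (\ref{411}) and your covariance identity match the paper's, and the exchange of summations yielding $\mathrm{Var}_{\rm w}(\tau_n(t)|\mathbf T_n)=2t\sum_{u\in\mathbf T_n}\lambda^{|u|}S(u)^2$ with $S(u)=2s_n^{(u)}-\lambda^{-|u|}$ is a correct algebraic reformulation of the paper's double sum $2t\sum_{x,y}\pi_n(x)\pi_n(y)\sigma^2_{|x\wedge y|}$. The gap is in the claim $s_n^{(u)}\asymp W^u\lambda^{-|u|}T_{n-|u|}$. On the positive-probability event that the subtree $\mathbf T^u$ dies out, $W^u=0$ while $s_n^{(u)}\ge Z^u_{|u|}/\lambda^{|u|}=\lambda^{-|u|}>0$, so even the upper bound in $\asymp$ is false. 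More fundamentally, the constant implicit in the per-tree Kesten--Stigum asymptotic $Z^u_{|u|+j}\approx m^jW^u$ is tree-dependent (the ``$n$ large enough'' in (\ref{211}) depends on the tree), so it cannot be taken uniform over the $\Theta(m^k)$ vertices $u$ at level $k$; and for $|u|$ within $O(\log n)$ of $n$ --- precisely where the weight $\lambda^{|u|}$ is largest --- the subtree has not yet relaxed to its Kesten--Stigum profile at all. Consequently the reduction to $\sum_{|u|=k}(W^u)^2\lesssim Z_k$ does not control what you need, and your alternative target $\sum_{|u|=k}(Z_i^u)^2\lesssim Z_k\,m^{2(i-k)}$ ``across the relevant range of $(k,i)$'' is a genuine two-parameter strengthening of (\ref{212}) that you announce but do not prove (and for which the Borel--Cantelli must now be run jointly in $(k,n)$, since $s_n^{(u)}$ itself depends on $n$ when $\lambda\le m$).

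The paper sidesteps any such uniform statement: it splits $\sum_{x,y}\lambda^{|x\wedge y|-|x|-|y|}$ at the single level $n-c\log n$. Below that level it uses the crude bound $\lambda^{|x\wedge y|}\le\lambda^{n-c\log n}$ together with $\bigl(\sum_{x\in\mathbf T_n}\lambda^{-|x|}\bigr)^2=s_n^2$, giving $\lambda^{n}s_n^2\,n^{-c\log\lambda}=o(\lambda^n s_n^2/n^2)$ for $c>3/\log\lambda$; above that level it bounds $\lambda^{-|x|-|y|}\le\lambda^{-2(n-c\log n)}$ and counts pairs via $\sum_{|w|=n-c\log n}(Z_n^w)^2$, which is exactly what (\ref{212}) controls, with the free $\epsilon$ in (\ref{212}) absorbing the polynomial factors. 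Only one generation's second moment is ever needed; your decomposition spreads that burden over all levels, which is precisely the difficulty the paper's choice of split point is designed to avoid.
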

\begin{proof}
The expected value (\ref{411}) is clear using the fundamental estimates $\mathbf E_{\rm w} (L^x(t)|\mathbf T_n)=t$ and $\sum_{|x|=k}\nu_x=Z_{k+1}$. As for (\ref{412}), for any $x,y\in\mathbf T_n$, conditioned at $L^{x\wedge y}_n(t)$, local times $L^x_n(t)$ and $L^y_n(t)$ are independent with the same expected value $L^{x\wedge y}_n(t)$, thus by Lemma \ref{PGextreme} (1),
$$\mbox{Cov}_{\rm w}(L^x_n(t),L_n^y(t)|\mathbf T_n)=\mathrm{Var}_{\rm w}(L_n^{x\wedge y}(t)|\mathbf T_n)=2t\sigma^2_{|x\wedge y|}\le 2t\frac{\lambda}{\lambda-1}\lambda^{|x\wedge y|},$$
then by (\ref{TtRelation}), 
\begin{equation*}
\begin{aligned}
&\;\quad \mathrm{Var}_{\rm w}(\tau_n(t)|\mathbf T_n)\\
&=\mathrm{Var}_{\rm w} \left(\sum_{x\in\mathbf T_n}\pi(x)L^x_n(t)\middle|\mathbf T_n\right)\\
&=\sum_{x,y\in \mathbf T_n}\pi(x)\pi(y)\mathrm{Cov}_{\rm w}(L^x_n(t),L^y_n(t)|\mathbf T_n)\\
&\le 2t\frac{\lambda}{\lambda-1}\sum_{x,y\in\mathbf T_n}\lambda^{|x\wedge y|-|x|-|y|}\left(1+\frac{\nu_x}{\lambda}+\frac{\nu_y}{\lambda}+\frac{\nu_x\nu_y}{\lambda^2}\right),
\end{aligned}
\end{equation*}
where $\nu_x$ is the number of children for $x\in\mathbf T_n$.

Moreover, 
$$\lambda^{|x\wedge y|-|x|-|y|}\frac{\nu_x}{\lambda}=\sum_{\overleftarrow z=x}\lambda^{|z\wedge y|-|z|-|y|},$$
therefore the variance above is further bounded by
\begin{equation*}
\mathrm{Var}_{\rm w}(\tau_n(t)|\mathbf T_n)\le 8t\frac{\lambda}{\lambda-1}\sum_{x,y\in\mathbf T_n}\lambda^{|x\wedge y|-|x|-|y|}.
\end{equation*}

Now it suffices to prove that 
$$\sum_{x,y\in\mathbf T_n}\lambda^{|x\wedge y|-|x|-|y|}=o\left(\frac{\lambda^{n}s_n^2}{n}\right).$$
Indeed, fix any $c>0$,
\begin{equation*}
\begin{aligned}
&\quad\;\sum_{x,y\in\mathbf T_n}\lambda^{|x\wedge y|-|x|-|y|}\\
&\le \sum_{|x\wedge y|< n-c\log n}\lambda^{n-c\log n-|x|-|y|}+\sum_{|x\wedge y|\ge n-c\log n}\lambda^{n-|x|-|y|}\\
&\le \lambda^{n-c\log n}\left(\sum_{x\in\mathbf T_n}\lambda^{-|x|}\right)^2+\sum_{|x\wedge y|\ge n-c\log n}\lambda^{n-2(n-c\log n)}\\
&\le \frac{\lambda^{n}s_n^2}{n^2}+\lambda^{2c\log n-n}\sum_{|x|=n-c\log n}(Z^x_n)^2,\\
\end{aligned}
\end{equation*}
where by (\ref{212}) and definition of $s_n$,
\begin{equation*}
\begin{aligned}
&\quad\;\lambda^{2c\log n-n}\sum_{|x|=n-c\log n}(Z^x_n)^2\\
&\le\lambda^{2c\log n-n}Z_n^{1+\epsilon}\\
&\le\lambda^{2c\log n-n}(s_n\lambda^n)^{1+\epsilon}=o\left(\frac{\lambda^ns_n^2}{n}\right).
\end{aligned}
\end{equation*}
\end{proof}

\begin{prop}\label{maincore}
Recall $s_n$ from Lemma \ref{variancecal}, and recall that $\sigma_n^2=\frac{\lambda^{n+1}-1}{\lambda-1}$. Under {\rm({\bf H})},
for any $\mu\in \mathbb R$ and $\mathbf P_{\rm{GW}}(\cdot|\cal S)$-almost surely any tree $\mathbf T$,
$$\mathbf P_{\rm w}\left(\frac{T_n^{cov}}{2s_n\sigma_n^2}-n\log m-\log W\le \mu\middle| \mathbf T_n\right)\rightarrow e^{-e^{-\mu}}.$$
\end{prop}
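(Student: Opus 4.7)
The plan is to transfer the excursion-time scaling limit (Proposition~\ref{mainexcursion}) to the real-time statement via the sandwich $\tau_n(t_n^{cov}) \le T_n^{cov} \le \lim_{\eta\to 0+}\tau_n(t_n^{cov}+\eta)$ recalled in (\ref{TtRelation}), showing that $\tau_n$ is concentrated tightly enough around its mean $2t s_n$ that substituting $t = t_n^{cov}$ introduces only $o(s_n\sigma_n^2)$ error.

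The first step is a Chebyshev concentration for $\tau_n(t_n^\mu)$. By Lemma~\ref{variancecal},
$$\mathbf E_{\rm w}(\tau_n(t_n^\mu)|\mathbf T_n)=2t_n^\mu s_n,\qquad \mathrm{Var}_{\rm w}(\tau_n(t_n^\mu)|\mathbf T_n)=o\!\left(\frac{t_n^\mu\lambda^n s_n^2}{n}\right).$$
Since $t_n^\mu\asymp n\sigma_n^2$ and $\lambda^n\asymp\sigma_n^2$, the variance is $o(s_n^2\sigma_n^4)$, so that for every fixed $\epsilon>0$,
$$\mathbf P_{\rm w}\!\left(|\tau_n(t_n^\mu)-2t_n^\mu s_n|>\epsilon s_n\sigma_n^2\,\middle|\,\mathbf T_n\right)=o(1).$$

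The second step combines this with Proposition~\ref{mainexcursion}. Set $\Psi_n:=T_n^{cov}/(2s_n\sigma_n^2)-n\log m-\log W$. On the event $\{t_n^{cov}\le t_n^\mu\}$ the monotonicity of $\tau_n$ together with (\ref{TtRelation}) gives $T_n^{cov}\le\tau_n(t_n^\mu)$ (up to a $\mathbf P_{\rm w}$-null boundary), so on the intersection with the Chebyshev event,
$$\frac{T_n^{cov}}{2s_n\sigma_n^2}\le\frac{t_n^\mu}{\sigma_n^2}+\epsilon=\log Z_n+\mu+\epsilon.$$
By (\ref{211}) of Lemma~\ref{typical}, $\log Z_n-n\log m-\log W=o(1)$ almost surely on $\mathcal S$, hence $\Psi_n\le\mu+\epsilon+o(1)$ on this intersection, whose $\mathbf P_{\rm w}$-probability is at least $\mathbf P_{\rm w}(t_n^{cov}\le t_n^\mu|\mathbf T_n)-o(1)\to e^{-e^{-\mu}}$. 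Symmetrically, on $\{t_n^{cov}>t_n^\mu\}$ we have $T_n^{cov}\ge\tau_n(t_n^{cov})\ge\tau_n(t_n^\mu)$ by monotonicity, leading to $\Psi_n\ge\mu-\epsilon-o(1)$ with probability tending to $1-e^{-e^{-\mu}}$. Shifting $\mu$ by $\pm\epsilon$ and letting $\epsilon\to 0$ yields
$$\liminf_n\mathbf P_{\rm w}(\Psi_n\le\mu|\mathbf T_n)\ge e^{-e^{-\mu}}\quad\text{and}\quad\limsup_n\mathbf P_{\rm w}(\Psi_n\le\mu|\mathbf T_n)\le e^{-e^{-\mu}},$$
which is the claim, the continuity of $\mu\mapsto e^{-e^{-\mu}}$ handling the $\pm\epsilon$ shifts.

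The only slightly delicate point is the bookkeeping: one must check that the variance order $o(t_n^\mu\lambda^n s_n^2/n)$ really produces an error negligible on the scale $s_n\sigma_n^2$, which is exactly where the $1/n$ in Lemma~\ref{variancecal} is crucial (without it the concentration would just barely fail at the $O(1)$ scale). No new geometric input on $\mathbf T$ is required beyond what was already used for Proposition~\ref{mainexcursion} and (\ref{211}), so the conclusion holds $\mathbf P_{\rm GW}(\cdot|\mathcal S)$-almost surely exactly when those do.
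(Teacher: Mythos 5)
Your proof is correct and follows essentially the same route as the paper's: establish Chebyshev concentration of $\tau_n(t_n^\mu)$ around its mean $2t_n^\mu s_n$ at scale $\epsilon s_n\sigma_n^2$ using Lemma~\ref{variancecal} (the paper works with $t_n^{\mu\pm\alpha}$ and a deviation window $2\alpha s_n\sigma_n^2$, which is the same thing up to renaming), then sandwich $T_n^{cov}$ between $\tau_n(t_n^{cov})$ and $\tau_n(t_n^\mu)$ on each of $\{t_n^{cov}\le t_n^\mu\}$ and $\{t_n^{cov}>t_n^\mu\}$ via monotonicity, invoke Proposition~\ref{mainexcursion}, and remove the slack by letting $\epsilon\to 0$ and using continuity of $\mu\mapsto e^{-e^{-\mu}}$. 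The only presentational difference is that the paper introduces an explicit extra parameter $\beta>0$ to handle the right-continuity of $\tau_n$ at $t_n^{cov}$, whereas you absorb this into the observation that $\{t_n^{cov}=t_n^\mu\}$ is $\mathbf P_{\rm w}$-null; both are fine.
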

\begin{proof}
By Lemma \ref{typical},
$$\log Z_n=n\log m+\log W+o(1),$$
therefore it suffices to show that (rigorously speaking, one should prove the following convergence for $\mu\pm\epsilon$, then take $\epsilon\rightarrow 0$ to deduce the proposition), 
$$\mathbf P_{\rm w}\left(\frac{T_n^{cov}}{2s_n\sigma_n^2}- \log Z_n\le\mu\middle|\mathbf T_n\right)\rightarrow e^{-e^{-\mu}}.$$
In other words (recall $t_n^\mu$ defined in (\ref{aboutcover})), it suffices to prove that
$$\mathbf P_{\rm w}\left(\frac{T_n^{cov}}{2s_n\sigma_n^2}\le \frac{t_n^\mu}{\sigma_n^2}\middle|\mathbf T_n\right)\rightarrow e^{-e^{-\mu}}.$$

By (\ref{TtRelation}), for any $\alpha>0$,
\begin{equation*}
\begin{aligned}
\mathbf P_{\rm w}(T_n^{cov}\le 2s_nt_n^\mu|\mathbf T_n)&\le\mathbf P_{\rm w}(\tau_n(t_n^{cov})\le 2s_nt_n^\mu|\mathbf T_n)\\
&\le \mathbf P_{\rm w}(t_n^{cov}\le t_n^{\mu+\alpha}|\mathbf T_n)\\
&\qquad+\mathbf P_{\rm w}(t^{cov}_n>t_n^{\mu+\alpha},|\tau_n(t_n^{\mu+\alpha})-2t_n^{\mu+\alpha}s_n|>2s_n(t^{\mu+\alpha}_n-t^\mu_n)|\mathbf T_n).
\end{aligned}
\end{equation*}

For the first term, by Proposition \ref{mainexcursion}, 
$$\mathbf P_{\rm w}(t_n^{cov}\le t_n^{\mu+\alpha}|\mathbf T_n)\le (1+o(1))e^{-e^{-\mu-\alpha}}.$$
For the second term, by Chebyshev's inequality and Lemma \ref{variancecal}
\begin{equation*}
\begin{aligned}
&\quad\;\mathbf P_{\rm w}(t^{cov}_n>t_n^{\mu+\alpha},|\tau_n(t_n^{\mu+\alpha})-2t_n^{\mu+\alpha}s_n|>2s_n(t^{\mu+\alpha}_n-t^\mu_n)|\mathbf T_n)\\
&\le\mathbf P_{\rm w}(|\tau_n(t_n^{\mu+\alpha})-2t_n^{\mu+\alpha}s_n|>2s_n(t^{\mu+\alpha}_n-t^\mu_n)|\mathbf T_n)\\
&=o\left(\frac{\frac{1}{n}\lambda^nt^{\mu+\alpha}_n}{(t^{\mu+\alpha}_n-t^\mu_n)^2}\right)=\alpha^{-2}o\left(1\right).
\end{aligned}
\end{equation*}
Let $\alpha\rightarrow 0+$, we have
$$\limsup_{n\rightarrow\infty}\mathbf P_{\rm w}(T_n^{cov}\le 2s_nt_n^\mu|\mathbf T_n)\le e^{-e^{-\mu}}.$$
Similarly, for any $\alpha>0$, and any $\beta(\alpha)>0$ small enough,
\begin{equation*}
\begin{aligned}
&\;\quad\mathbf P_{\rm w}(T_n^{cov}\le 2s_nt_n^\mu|\mathbf T_n)\\
&\ge \mathbf P_{\rm w}\left(\tau_n(t_n^{cov}+\beta)\le 2s_nt_n^\mu,t^{cov}_n\le t^{\mu-\alpha}_n|\mathbf T_n\right)\\
&=\mathbf P_{\rm w}\left(t^{cov}_n\le t^{\mu-\alpha}_n|\mathbf T_n\right)-\mathbf P_{\rm w}\left(\tau_n(t_n^{cov}+\beta)\ge 2s_nt_n^\mu,t^{cov}_n\le t^{\mu-\alpha}_n|\mathbf T_n\right)\\&\rightarrow e^{-e^{-\mu+\alpha}}.
\end{aligned}
\end{equation*}
\end{proof}
\begin{proof}[Proof of Theorem \ref{main}]
By Proposition \ref{maincore}, it suffices to estimate $s_n$.

For $1<\lambda<m$, one can use (\ref{211}) to show that
$$s_n-\sum_{i=0}^n\frac{m^i}{\lambda^i}W=O\left(\frac{m^{n/2}}{\lambda^{n/2}}+\sum_{i=n/2}^n\frac{m^{i/2}\log n}{\lambda^i}\right)=o\left(\frac{m^n}{n\lambda^n}\right).$$
Therefore, $s_n$ can be replaced by $(\frac{m}{\lambda}-1)^{-1}\frac{m^{n+1}}{\lambda^{n+1}}W$, and the conclusion follows.

For $\lambda>m$, similarly, the difference between $\sum_{i=0}^\infty\frac{Z_i}{\lambda^i}$ and $s_n$ is negligible,
$$\sum_{i=0}^{\infty}\frac{Z_i}{\lambda^i}-s_n=\sum_{i=n+1}^{\infty}\frac{Z_i}{\lambda^i}=O \left(\frac{m^n}{\lambda^n}\right).$$

For $\lambda=m$, $s_n=\sum_{i=0}^n Z_i$ follows from its definition in Lemma \ref{variancecal}.
\end{proof}
{\bf Acknowledgement}. The author would like to thank Yueyun Hu for pointing out this topic, and providing valuable advice. I would also like to thank Yijun Wan and  the reviewers for various suggestions on this paper.
\bibliography{aa}
\end{document}